\documentclass[12pt]{amsart}
\numberwithin{equation}{section}

\usepackage{amssymb}
\hfuzz12pt \vfuzz12pt


\def\cb{{\mathcal B}}

\def\ce{{\mathcal E}}

\def\ch{{\mathcal H}}

\def\ck{{\mathcal K}}

\def\cs{{\mathcal S}}


\def\ga{{\mathfrak A}} \def\gpa{{\mathfrak a}}
 \def\gpb{{\mathfrak b}}

\def\gpm{{\mathfrak m}}

 \def\gpo{{\mathfrak o}}


\def\bc{{\mathbb C}}
\def\bd{{\mathbb D}}

\def\bn{{\mathbb N}}

\def\bt{{\mathbb T}}

\def\bz{{\mathbb Z}}

\def\a{\alpha}

 \def\G{\Gamma}
\def\d{\delta}

\def\l{\lambda} 

\def\m{\mu}

\def\s{\sigma} 

\def\f{\varphi} \def\F{\Phi}
\def\th{\theta}  
\def\om{\omega}

\def\id{\hbox{id}}

\newtheorem{thm}{Theorem}[section]
\newtheorem{lem}[thm]{Lemma}

\newtheorem{prop}[thm]{Proposition}

\newtheorem{rem}[thm]{Remark}

\def\ad{\mathop{\rm Ad}}

\def\di{\mathop{\rm d}\!}

\def\idd{{1}\!\!{\rm I}}

\begin{document}

\title[uniform convergence]
{on the uniform convergence of ergodic averages for $C^*$-dynamical systems}
\author{Francesco Fidaleo}
\address{Dipartimento di Matematica,
II Universit\`{a} di Roma ``Tor Vergata'',
Via della Ricerca Scientifica, 00133 Roma, Italy.
e--mail: \tt fidaleo@mat.uniroma2.it}

\date{\today}

\begin{abstract}
We investigate some ergodic and spectral properties of general (discrete) $C^*$-dynamical systems $(\ga,\F)$ made of a unital $C^*$-algebra and a multiplicative, identity-preserving $*$-map $\F:\ga\to\ga$, particularising the situation when $(\ga,\F)$ enjoys the property of unique ergodicity with respect to the fixed-point subalgebra. 

For $C^*$-dynamical systems enjoying or not the strong ergodic property mentioned above, we provide conditions on $\l$ in the unit circle $\{z\in\bc\mid |z|=1\}$ and the corresponding eigenspace $\ga_\l\subset\ga$ for which the sequence of Cesaro averages $\left(\frac1{n}\sum_{k=0}^{n-1}\l^{-k}\F^k\right)_{n>0}$, converges point-wise in norm.

We also describe some pivotal examples coming from quantum probability, to which the obtained results can be applied.

\vskip 0.3cm
\noindent
{\bf Mathematics Subject Classification}: 37A55, 46L55 47A35.\\
{\bf Key words}: Ergodic theorems, $C^*$-dynamical systems, unique ergodicity.
\end{abstract}

\maketitle

\section{introduction}

The present paper is devoted to the investigation of ergodic properties of noncommutative ({\it i.e.} quantum) $C^*$-dynamical systems.

The investigation of ergodic properties of classical dynamical systems was at first motivated by the problem of justifying the thermodynamical laws from the microscopic principles of statistical mechanics, {\it i.e.} the so-called ergodic hypothesis. However, after discovering the quantum behaviour of the matter at the microscopic level ({\it i.e.} the quantum mechanics), it was natural to address the systematic study of noncommutative aspects of many branches of mathematics, which has seen an impetuous growth in the last decades, not least because of their applications to quantum physics.

Concerning classical dynamical systems $(X,T,\m)$ made of a compact Hausdorff space $X$, a continuous map $T:X\rightarrow X$, and a probability Radon measure  $\m$  which is invariant under the action of $T$ ({\it i.e.} $\m\circ T^{-1}=\m$), classical ergodic theory primarily deals with the study of the long-time behaviour of the Cesaro means ({\it i.e.} ergodic~averages)
\begin{equation}
\label{anl}
M_{f,\l}(n):=\frac1{n}\sum_{k=0}^{n-1}\l^{-k}f\circ T^{k}\,,\quad n=0,1\dots\,,
\end{equation}
$|\l|=1$, of continuous functions or more generally of any measurable function $f$.

Among such results which constitute the milestones of ergodic theory, we mention the Birkhoff individual ergodic theorem, which concerns the study of the point-wise limit
$\lim_{n\to+\infty}M_{f,1}(n)(x)$, $x\in X$, when $f$ is summable, and the von Neumann mean 
ergodic theorem, dealing with the limit 
$L^2$-$\lim_{n\to+\infty}M_{f,1}(n)$ when $f$ is square-summable. 

At the same way, also the investigation of uniform convergence of ergodic averages ({\it i.e.} involving directly continuous functions and the convergence in the norm topology) in \eqref{anl} is of great interest. We mention the following situation relative to dynamical systems enjoying a very strong ergodic property. Indeed, the classical dynamical system $(X,T)$ is said to be uniquely ergodic
if there exists a unique probability Radon measure $\m$ which is invariant under the action of the transformation $T$. It is a well-known fact that $(X,T)$ is uniquely ergodic if and only if, for the Cesaro average in \eqref{anl},
$$
\lim_{n\to+\infty}M_{f,1}(n)=\int_X f\di\m\,,\quad f\in C(X)\,,
$$
uniformly. In \cite{R}, the limit of the Cesaro averages $M_{f,\l}$ was investigated for more general $\l$ in the unit circle.

In view of natural applications to quantum physics, it is then natural to address the systematic study of the ergodic properties of quantum dynamical systems. As a matter of fact, the situation in the quantum setting is rather more involved than the classical situation in several respects. For instance, all statements must be provided in terms of the dual concept of ``functions'' instead of ``points'', since the latter are meaningless in the quantum cases.

As for the literature on noncommutative ergodic theory, 
the reader is firstly referred to the seminal paper \cite{NSZ} for a thorough study of the multiple correlations and quantum 
(weak) mixing associated with invariant states with central support in the bidual. Some natural generalisations of quantum ergodic theory are investigated in a series of  papers \cite{F, F1, F2, F3, F4} without assuming in general the centrality of the support of the involved states, whereas the reader is referred to \cite{BF2, CrF} for some direct applications to physics and quantum probability.

In the paper \cite{F4}, the generalisation of the result in \cite{R} was extended to the noncommutative setting. More precisely, let $(\ga, \F,\f)$ be a uniquely ergodic $C^*$-dynamical system based on a unital $C^*$-algebra, a unital $*$-homomorphism $\F:\ga\to\ga$ with $\f\in\cs(\ga)$ as the unique 
invariant state. Consider the covariant Gelfand-Naimark-Segal representation $\big(\ch_\f,\pi_\f, V_{\f,\F},\xi_\f\big)$ associated to the state $\f$, together with the peripheral pure-point spectra (see below for the definition) $\s^{\mathop{\rm ph}}_{\mathop{\rm pp}}(\F)$ and $\s^{\mathop{\rm ph}}_{\mathop{\rm pp}}(V_{\f,\F})$ of $\F$ and the isometry $V_{\f,\F}\in\cb(\ch_\f)$, respectively. We have 
$\s^{\mathop{\rm ph}}_{\mathop{\rm pp}}(\F)\subset\s^{\mathop{\rm ph}}_{\mathop{\rm pp}}(V_{\f,\F})$, but in general they are different. As for the classical case, we have shown that the analogous 
\begin{equation}
\label{malam}
M_{a,\l}(n):=\frac1{n}\sum_{k=0}^{n-1}\l^{-k}\F^{k}(a)\,,\quad n\in\bn\smallsetminus\{0\}
\end{equation}
of the Cesaro averages in \eqref{anl} converge in norm for each fixed $a\in\ga$ and 
$\l\in\s^{\mathop{\rm ph}}_{\mathop{\rm pp}}(\F)\bigcup\s^{\mathop{\rm ph}}_{\mathop{\rm pp}}(V_{\f,\F})^{\rm c}$, where the complement is taken in the unit circle $\bt:=\{\l\in\bc\mid|\l|=1\}$. 

Some examples based on noncommutative 2-torus relative to the non convergence of $M_{a,\l}(n)$ for $\l\in\s^{\mathop{\rm ph}}_{\mathop{\rm pp}}(V_{\f,\F})\smallsetminus\s^{\mathop{\rm ph}}_{\mathop{\rm pp}}(\F)$ are also exhibited
in \cite{DFGR}.

In the present paper, we generalise some results obtained in \cite{F4} for $C^*$-dynamical systems $(\ga, \F)$ as above. Indeed, we first consider the set $\cs(\ga)^\F$ made of all invariant states under the action of the $*$-endomorphism $\F$, and
define the {\it full peripheral pure-point spectrum} as
$$
\s^{(\mathop{\rm ph, f})}_{\mathop{\rm pp}}(\F):=\bigcup\big\{\s^{\mathop{\rm ph}}_{\mathop{\rm pp}}(V_{\f,\F})\mid \f\in\cs(\ga)^\F\big\}\,.
$$ 
Notice that, it is a spectral set canonically associated to the $C^*$-dynamical system $(\ga, \F)$.
 
The first noteworthy result we can prove is that, if $\l\in\bt\smallsetminus\s^{(\mathop{\rm ph, f})}_{\mathop{\rm pp}}(\F)$, then for each $a\in\ga$,
$$
\lim_{n\to+\infty}\frac1{n}\sum_{k=0}^{n-1}\l^{-k}\F^k(a)=0\,,
$$
in the norm topology, or "uniformly" with an abuse of terminology, but in analogy with the classical case. 

In general, we have no natural condition which assures the convergence of the averages $M_{a,\l}(n)$ for $\l\in\s^{\mathop{\rm ph}}_{\mathop{\rm pp}}(\F)$ as it happens for uniquely ergodic $C^*$-dynamical systems. However, some partial results can be also obtained under the assumption of a weaker property of unique ergodicity. 

Namely, we consider the $C^*$-dynamical systems $(\ga, \F)$ which are {\it uniquely ergodic with respect to the fixed-point subalgebra}. The last condition of ergodicity is weaker than unique ergodicity, provided that for the fixed-point $*$-subalgebra, $\ga^\F\supsetneq\bc\idd_{\ga}$. 

For such systems, and for the eigenvalues $\l\in\s^{\mathop{\rm ph}}_{\mathop{\rm pp}}(\F)$ for which the associated eigenspaces $\ga_\l$ contain an isometry or a co-isometry, we can show that there exists a norm one projection $E_\l:\ga\rightarrow\ga_{\l}$ such that
$$
\lim_{n\to+\infty}\frac1{n}\sum_{k=0}^{n-1}\l^{-k}\F^k=E_\l\,,
$$
point-wise in norm. Such a result includes that corresponding to uniquely ergodic dynamical systems $(\ga,\F.\f)$ for which, when $\l\in\s^{\mathop{\rm ph}}_{\mathop{\rm pp}}(\F)$, the eigenspaces $\ga_\l$ are automatically generated by a single unitary and 
$\f(xu^*)u=E_\l(x)=\f(u^*x)u$.

The paper ends with some examples arising from quantum probability, enjoying or not unique ergodicity with respect to the fixed-point subalgebra, for which the results obtained in the present paper can be applied. More complicated examples arising from the noncommutative geometry ({\it i.e.} the noncommutative 2-torus, see \cite{DFGR}) will be presented elsewhere.

\section{preliminaries}

In the present paper, we deal without further mention with unital $C^*$-algebras $\ga$ with unity $0\neq\idd:=\idd_\ga$. We also recall that $u\in\ga$ is an isometry (co-isometry) if $u^*u=\idd$ ($uu^*=\idd$). A unitary operator is an element $u\in\ga$ which is an isometry and a co-isometry: $u^*u=\idd=uu^*$.

With $\bd:=\{\l\in\bc\mid|\l|\leq1\}$ and $\bt:=\{\l\in\bc\mid|\l|=1\}$ we denote the unit disc and the unit circle of the complex plane, respectively. Of course, $\bt=\partial\bd$. The unit circle $\bt$ is homeomorphic to the interval $[0,2\pi)$ by
$\th\in[0,2\pi)\mapsto e^{-\imath\th}$, after identifying the endpoints $0$ and $2\pi$.

A (discrete) $C^*$-dynamical system is a pair $(\ga,\F)$ consisting of a $C^*$-algebra and a positive map $\F:\ga\to\ga$. Notice that, if $\|\F\|=1$, which happens if $\F$ is completely positive and $\|\F(\idd)\|=1$, then $\s(\F)\subset\bd$. The part of the spectrum $\s(\F)\bigcap\bt$ living on the unit circle is called {\it peripheral}.

We denote by $\cs(\ga)^\F$ and $\partial\left(\cs(\ga)^\F\right)$ the convex $*$-weakly compact set of the invariant states of $\ga$ under the action of $\F$, and its convex boundary made of extreme invariant states, respectively. The elements of $\partial\left(\cs(\ga)^\F\right)$ are called the {\it ergodic states}, see {\it e.g.} \cite{Sa}.

With a slight abuse of notation, we also denote the triplet $(\ga,\F,\f)$ as a $C^*$-dynamical system when we want to point out the state $\f$ on 
$\ga$, which is invariant under the dynamics generated by $\F$. 

For the $C^*$-dynamical system $(\ga,\F,\f)$ as above, consider the Gelfand-Naimark-Segal (GNS for short)  representation $\big(\ch_\f,\pi_\f,\xi_\f\big)$, see {\it e.g.} \cite{Sa}. If in addition
$$
\f\big(\F(a)^*\F(a)\big)\leq \f(a^*a)\,,\quad a\in\ga\,,
$$
then there exists a unique contraction $V_{\f,\F}\in\cb(\ch_\f)$ such that $V_{\f,\F}\xi_\f=\xi_\f$ and
$$
V_{\f,\F}\pi_\f(a)\xi_\f=\pi_\f(\F(a))\xi_\f\,,\quad a\in\ga\,.
$$
The quadruple $\big(\ch_\f,\pi_\f, V_{\f,\F},\xi_\f\big)$ is called the {\it covariant GNS representation} associated to $(\ga,\F,\f)$.

If $\F$ is multiplicative, hence a $*$-homomorphism, then $V_{\f,\F}$ is an isometry with range-projection $V_{\f,\F}V_{\f,\F}^*$, the orthogonal projection onto the subspace $\overline{\pi_\f\big(\F(\ga)\big)\xi_\f}$, see \cite{NSZ}, Lemma 2.1.

Concerning the $C^*$-dynamical system $(\ga,\F)$ made of a unital $C^*$-algebra $\ga$ and a unital completely positive linear map $\F:\ga\to\ga$, we can consider the sequence \eqref{malam} of the Cesaro averages
$$
M_{a,\l}(n)=\frac1{n}\sum_{k=0}^{n-1}\l^{-k}\F^{k}(a)\,,\quad \l\in\bc\smallsetminus\{0\}\,,\,\,\,a\in\ga\,.
$$
\begin{rem}
Notice that:
\begin{itemize}
\item[(i)] if $|\l|>1$, then $\lim_{n\to+\infty}M_{a,\l}(n)=0$ in the norm topology;
\item[(ii)] if $M_{a,\l}(n)$ converges in the norm topology (resp. in the $\s(\ga,\ga^*)$-topology), then
$$
\lim_{n\to+\infty}\frac{\l^{-n}\F^{n}(a)}{n}=0\,,
$$
in the norm topology (resp. in the $\s(\ga,\ga^*)$-topology).
\end{itemize}
\end{rem}
Indeed, we have
$$
\big\|M_{a,\l}(n)\big\|\leq\Big(\frac{\|a\|}{1-|\l|^{-1}}\Big)\frac{1-|\l|^{-n}}{n}\rightarrow0\,,
$$
if $|\l|>1$ and $n\to+\infty$.

On the other hand,
$$
M_{a,\l}(n+1)-M_{a,\l}(n)=\frac{\l^{-(n+1)}}{n+1}\F^{(n+1)}(a)-\frac{M_{a,\l}(n)}{n+1}\,,
$$
and therefore
$$
M_{a,\l}(n+1)-\Big(1-\frac1{n+1}\Big)M_{a,\l}(n)=\frac{\l^{-(n+1)}}{n+1}\F^{(n+1)}(a)\,.
$$
We conclude that, if $M_{a,\l}(n)$ converges in some topology of $\ga$ then, necessarily, $\frac{\l^{-n}\F^{n}(a)}{n}\to0$ in the corresponding topology.

\noindent\hspace{12.33 cm}$\square$

For $0<|\l|<1$, we then argue that, if $\limsup_n\big(|\l|^{-n}\|\F^{n}(a)\|/n\big)>0$ (which happens if $\F$ is isometric and $a\neq0$) or there exists $f\in\ga^*$ such that $\limsup_n\big(|\l|^{-n}|f(\F^{n}(a))|/n\big)>0$, then $M_{a,\l}(n)$ cannot converge in the norm topology or in the weak topology, respectively. 

Therefore, the most complicated task concerning the convergence of the sequence $\left(\frac1{n}\sum_{k=0}^{n-1}\l^{-k}\F^k(a)\right)_{n>0}$ for fixed $a\in\ga$ and $\l\in\bc\smallsetminus\{0\}$, corresponds to the situation $|\l|=1$, which is the topic of the present paper.

Among all dynamical systems, we can consider those enjoying some strong ergodic properties.  Indeed, a $C^*$-dynamical system $(\ga,\F)$ made of a unital $C^*$-algebra $\ga$ and a unital completely positive linear map $\F:\ga\to\ga$ is said to be {\it uniquely ergodic} if there exists only one invariant state $\f$ for the dynamics induced by $\F$. For a uniquely ergodic $C^*$-dynamical system, we simply write $(\ga,\F,\f)$ by pointing out that $\f\in\cs(\ga)$ is the unique invariant state. In this case,
$$
\cs(\ga)^\F=\partial\left(\cs(\ga)^\F\right)=\{\f\}\,.
$$

Denote by $\ga^\F:=\big\{a\in\ga\mid\F(a)=a\big\}$ the fixed-point subspace. If $\F$ is multiplicative, $\ga^\F$ is a $*$-subalgebra. If $\F$ is merely completely positive, then $\ga^\F$ is an {\it operator system} (see {\it e.g.} \cite{P} for the definition of operator system).

The $C^*$-dynamical system $(\ga,\F)$, with $\F$ multiplicative, is said to be {\it uniquely ergodic w.r.t. the fixed-point subalgebra} if it satisfies one of the equivalent properties 
(i)-(vi) listed in Theorem 2.1 in \cite{FM3} (see also \cite{AD}, Definition 3.3). 

In particular, $(\ga,\F)$ is uniquely ergodic w.r.t. the fixed-point subalgebra if and only if the ergodic averages $\frac1{n}\sum_{k=0}^{n-1}\F^k$ converge, point-wise in norm, 
necessarily to a $\F$-invariant conditional expectation onto $\ga^\F$. Obviously, if $\ga^\F=\bc$, then unique ergodicity w.r.t. the fixed-point subalgebra is the same as unique ergodicity.

Notice that, if $\F$ is merely completely positive, the analogous property of unique ergodicity w.r.t. the fixed-point operator system would concern the convergence, point-wise in norm, of the ergodic averages $\frac1{n}\sum_{k=0}^{n-1}\F^k$ to a
unital completely positive projection onto the operator system $\ga^\F$ which is invariant under $\F$.

Without further mention, from now on we specialise the matter to $C^*$-dynamical systems $(\ga,\F)$ such that $\ga$ is a unital $C^*$-algebra, and $\F:\ga\to\ga$ is a unital $*$-homomorphism.

Define
$$
\s^{\mathop{\rm ph}}_{\mathop{\rm pp}}(\F):=\big\{\l\in\bt\mid \l\,\text{is an eigenvalue of}\,\,\F\big\}
$$
the set of the peripheral eigenvalues of $\F$ ({\it i.e. the peripheral pure-point spectrum}), with $\ga_\l$ the relative eigenspaces. 

Obviously, $\idd\in\ga^\F\equiv\ga_1$ because $\F$ preserves the identity. 
Since $\F$ is a $*$-homomorphism,  
$\s^{\mathop{\rm ph}}_{\mathop{\rm pp}}(\F)=\s^{\mathop{\rm ph}}_{\mathop{\rm pp}}(\F)^{-1}$,
and in addition,  
$$
\big\{\l\in\bt\mid\,\,\ga_\l\,\text{contains an invertible operator}\big\}
$$
is a subgroup of $\s^{\mathop{\rm ph}}_{\mathop{\rm pp}}(\F)$. Indeed, $x\in\ga_\l\Rightarrow x^*\in\ga_{\l^{-1}}$, and if $x_1\in\ga_{\l_1}$,  $x_2\in\ga_{\l_2}$ are invertible, then $x_1x_2\in\ga_{\l_1\l_2}$ is an invertible 
eigenvector.

We also consider the peripheral pure-point spectrum
$$
\s^{\mathop{\rm ph}}_{\mathop{\rm pp}}(V_{\f,\F}):=\big\{\l\in\bt\mid \l\,\text{is an eigenvalue of}\,\,V_{\f,\F}\big\}
$$ 
of the isometry $V_{\f,\F}$.

Let $\f\in\cs(\ga)^\F$ and $E^{\f,\F}_\l\in\cb(\ch_\f)$ be the self-adjoint projection onto the eigenspace of $V_{\f,\F}$ corresponding to $\l\in\bt$. Of course, if 
$\l\notin\s^{\mathop{\rm ph}}_{\mathop{\rm pp}}(V_{\f,\F})$ then $E^{\f,\F}_\l=0$, and $E^{\f,\F}_1$ is projecting onto the closed subspace made 
of the vectors invariant under $V_{\f,\F}$. We also recall that 
$$
\dim\big(E^{\f,\F}_1\big)=1 \Longrightarrow \f\in\partial\left(\cs(\ga)^\F\right)\,,
$$
see {\it e.g.} \cite{Sa}, Proposition 3.1.10.

For $\xi\in\ch_\f$ and $n\in\bz$, consider the sequence
$$
\widehat{\m_\xi}(n):=\bigg\{\begin{array}{ll}
                      \langle V^n_{\f,\F}\xi,\xi\rangle& \text{if}\,\, n\geq0\,, \\[1ex]

                \overline{ \langle V^{-n}_{\f,\F}\xi,\xi\rangle} & \text{if}\,\, n<0\,.
                    \end{array}
                    \bigg.
$$
It is well known ({\it cf.} \cite{SN}, Lemme 1) that, for each $\xi\in\ch_\f$, such a sequence $\big\{\widehat{\m_\xi}(n)\big\}_{n\in\bz}$ is the Fourier transform of a positive bounded Radon measure $\m_\xi$ on the unit circle $\bt$. Therefore, $\m_\xi$ is nothing else than the spectral measure of $V_{\f,\F}$ relative to $\xi\in\ch_\f$:
if $\l=e^{-\imath\th}\in\bt$ then $\m_\xi(\{\th\})=\|E^{\f,\F}_\l\xi\|^2$.

One of the key-object of the present analysis is the
{\it full peripheral pure-point spectrum}
$$
\s^{(\mathop{\rm ph, f})}_{\mathop{\rm pp}}(\F):=\bigcup\big\{\s^{\mathop{\rm ph}}_{\mathop{\rm pp}}(V_{\f,\F})\mid \f\in\cs(\ga)^\F\big\}\,,
$$ 
where the suffix "${\rm f}$\," stands for "${\rm full}$\,".

Notice that, if $\F$ is a $*$-automorphism and $\f$ is an invariant state, $\s(\F),\s(V_{\f,\F})\subset\bt$, and therefore we simply write $\s_{\rm pp}(\F)$ and $\s^{({\rm f})}_{\rm pp}(\F)$. 

In addition, if $(\ga,\F)$ is uniquely ergodic with $\f$ the unique invariant state, then 
$\s^{(\mathop{\rm ph, f})}_{\mathop{\rm pp}}(\F)=\s^{\mathop{\rm ph}}_{\mathop{\rm pp}}(V_{\f,\F})$.

\section{$C^*$-dynamical systems, general properties}

For the sake of completeness, we start by reporting some standard results which were proved in \cite{F4}.
\begin{prop}
\label{gnsc}
Let the $C^*$-dynamical system $(\ga,\F,\f)$ be uniquely ergodic. Then $\s^{\mathop{\rm ph}}_{\mathop{\rm pp}}(\F)$ is a subgroup of $\bt$, and the corresponding eigen\-spaces $\ga_\l$, 
$\l\in\s^{\mathop{\rm ph}}_{\mathop{\rm pp}}(\F)$, are generated by a single unitary $u_\l$.
\end{prop}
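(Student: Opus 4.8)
The plan is to derive the whole statement from a single structural fact — that unique ergodicity forces the fixed-point algebra to be trivial, $\ga^\F=\bc\idd$ — and then to run a short manipulation exploiting that $\F$ is a unital $*$-homomorphism and that $|\l|=1$ on the peripheral spectrum.

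\emph{Step 1: $\ga^\F=\bc\idd$.} Let $a=a^*\in\ga^\F$ and take an arbitrary state $\om\in\cs(\ga)$. Each $\frac1n\sum_{k=0}^{n-1}\om\circ\F^k$ is a state (as $\F$ is unital and positive), and by the usual telescoping estimate every $*$-weak accumulation point of this sequence is $\F$-invariant, hence equals $\f$ by unique ergodicity. Since $\F^k(a)=a$, the sequence $\frac1n\sum_{k=0}^{n-1}\om\circ\F^k$ takes the constant value $\om(a)$ at $a$, so evaluating an accumulation point at $a$ gives $\f(a)=\om(a)$. As $\om$ is arbitrary and states separate the self-adjoint part of $\ga$, this forces $a=\f(a)\idd$, and decomposing a general element of $\ga^\F$ into real and imaginary parts (both of which lie in $\ga^\F$) we conclude $\ga^\F=\bc\idd$. (Equivalently one may invoke the characterisation of unique ergodicity through the norm convergence $M_{a,1}(n)\to\f(a)\idd$ recalled in Section~2, applied to $a\in\ga^\F$.)

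\emph{Step 2: eigenvectors are unitaries up to a scalar, and $\dim\ga_\l=1$.} Fix $\l\in\s^{\mathop{\rm ph}}_{\mathop{\rm pp}}(\F)$ and $0\neq x\in\ga_\l$. Since $\F$ is a $*$-homomorphism and $|\l|=1$, $\F(x^*x)=\F(x)^*\F(x)=|\l|^2x^*x=x^*x$ and likewise $\F(xx^*)=xx^*$, so $x^*x,xx^*\in\ga^\F=\bc\idd$, say $x^*x=c\,\idd$ and $xx^*=c'\,\idd$ with $c,c'>0$. Put $u_\l:=c^{-1/2}x\in\ga_\l$; then $u_\l^*u_\l=\idd$, so $u_\l u_\l^*$ is a projection, and being equal to $(c'/c)\idd$ it must be $\idd$ (so in fact $c=c'$). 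Thus $u_\l$ is a unitary in $\ga_\l$. For any $y\in\ga_\l$ we then have $u_\l^*y\in\ga_{\l^{-1}}\ga_\l\subset\ga_1=\bc\idd$, hence $y=u_\l(u_\l^*y)\in\bc u_\l$, i.e.\ $\ga_\l=\bc u_\l$.

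\emph{Step 3: $\s^{\mathop{\rm ph}}_{\mathop{\rm pp}}(\F)$ is a subgroup of $\bt$.} Indeed $1\in\s^{\mathop{\rm ph}}_{\mathop{\rm pp}}(\F)$ because $\idd\in\ga_1$; if $\l\in\s^{\mathop{\rm ph}}_{\mathop{\rm pp}}(\F)$ then $u_\l^*$ is a (unitary, hence nonzero) element of $\ga_{\l^{-1}}$, so $\l^{-1}\in\s^{\mathop{\rm ph}}_{\mathop{\rm pp}}(\F)$; and if $\l_1,\l_2\in\s^{\mathop{\rm ph}}_{\mathop{\rm pp}}(\F)$ then $u_{\l_1}u_{\l_2}$ is a unitary with $\F(u_{\l_1}u_{\l_2})=\l_1\l_2\,u_{\l_1}u_{\l_2}$, so $\l_1\l_2\in\s^{\mathop{\rm ph}}_{\mathop{\rm pp}}(\F)$. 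I expect the only substantive point to be Step~1; once $\ga^\F=\bc\idd$ is secured, Steps~2 and~3 are routine $C^*$-algebra, the key small observation being that an isometry whose range projection is a scalar multiple of the identity is automatically a unitary.
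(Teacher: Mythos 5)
Your proof is correct, and it follows essentially the argument the paper defers to (the proposition is stated here without proof, citing \cite{F4}): unique ergodicity forces $\ga^\F=\bc\idd$ via weak-$*$ accumulation points of the Cesaro averages of an arbitrary state, and then $x^*x,xx^*\in\ga^\F$ for $x\in\ga_\l$ yields the unitary generator and one-dimensionality of $\ga_\l$, from which the group property is immediate. No gaps; this is the standard route.
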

\noindent\hspace{12.33 cm}$\square$
\vskip.3cm
In general, there is no relation between $\s^{\mathop{\rm ph}}_{\mathop{\rm pp}}(\F)$ and $\s^{(\mathop{\rm ph, f})}_{\mathop{\rm pp}}(\F)$. However, for a uniquely ergodic $C^*$-dynamical system $(\ga,\F,\f)$,
a simple application of the Proposition \ref{gnsc} leads to
$$
\s^{\mathop{\rm ph}}_{\mathop{\rm pp}}(\F)\subset\s^{\mathop{\rm ph}}_{\mathop{\rm pp}}(V_{\f,\F})\equiv\s^{(\mathop{\rm ph, f})}_{\mathop{\rm pp}}(\F)\,.
$$
For some examples of uniquely ergodic $C^*$-dynamical systems, we get
$\s^{\mathop{\rm ph}}_{\mathop{\rm pp}}(\F)\subsetneq\s^{\mathop{\rm ph}}_{\mathop{\rm pp}}(V_{\f,\F})$, see {\it e.g.} \cite{DFGR, F4, R}.

\smallskip

For $\l\in\bt$, suppose that $u\in\ga_\l$ is an isometry. Since $\F$ is a $*$-map, $\l^{-1}\in\s^{(\mathop{\rm ph})}_{\mathop{\rm pp}}(\F)$ with $u^*\in\ga_{\l^{-1}}$ a co-isometry. Therefore non unitary eigenvectors associated to the peripheral spectrum
which are isometries or co-isometries appear in pair: they appear both or do not appear at all. For these cases, we provide a characterisation of whether 
$\l\in\s^{\mathop{\rm ph}}_{\mathop{\rm pp}}(\F)$ is also in 
$\s^{(\mathop{\rm ph, f})}_{\mathop{\rm pp}}(\F)$. 
\begin{prop}
Let $(\ga,\F)$ be a $C^*$-dynamical system, and $\l\in\s^{(\mathop{\rm ph})}_{\mathop{\rm pp}}(\F)$ with $u\in\ga_\l$. 
\begin{itemize}
\item[(i)] If $u$ is an isometry, then $\l\in\s^{(\mathop{\rm ph, f})}_{\mathop{\rm pp}}(\F)$ as well.
\item[(ii)] If $u$ is a co-isometry, then $\l\in\bt\smallsetminus\s^{(\mathop{\rm ph, f})}_{\mathop{\rm pp}}(\F)$ if and only if
for each $\om\in\cs(\ga)^\F$ we have $\xi_\om\perp\pi_\om(u^*u)\ch_\om$.
\end{itemize}
\end{prop}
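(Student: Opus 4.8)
The plan is to analyze the two cases via the covariant GNS construction, using the spectral measure $\m_\xi$ and the relation between eigenvectors of $\F$ in $\ga$ and eigenvectors of $V_{\f,\F}$ in $\ch_\f$.

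For (i), suppose $u \in \ga_\l$ is an isometry, so $u^*u = \idd$. First I would pick any $\om \in \cs(\ga)^\F$ (this set is nonempty by weak-$*$ compactness and the standard Markov–Kakutani argument) and pass to the covariant GNS quadruple $(\ch_\om, \pi_\om, V_{\om,\F}, \xi_\om)$. The candidate eigenvector is $\eta := \pi_\om(u)\xi_\om$. Using $V_{\om,\F}\pi_\om(a)\xi_\om = \pi_\om(\F(a))\xi_\om$ and $\F(u) = \l u$, one gets $V_{\om,\F}\eta = \l\eta$. The only thing to check is $\eta \neq 0$, and this is where the isometry hypothesis is essential: $\|\eta\|^2 = \om(u^*u) = \om(\idd) = 1 \neq 0$. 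Hence $\l \in \s^{\mathop{\rm ph}}_{\mathop{\rm pp}}(V_{\om,\F}) \subset \s^{(\mathop{\rm ph, f})}_{\mathop{\rm pp}}(\F)$.

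For (ii), suppose $u \in \ga_\l$ is a co-isometry, so $uu^* = \idd$ (equivalently $u^* \in \ga_{\l^{-1}}$ is an isometry, but I will argue directly with $u$). Fix $\om \in \cs(\ga)^\F$ and again set $\eta := \pi_\om(u)\xi_\om$; as above $V_{\om,\F}\eta = \l\eta$, and now $\|\eta\|^2 = \om(u^*u)$, which may vanish. I claim $\l \in \s^{\mathop{\rm ph}}_{\mathop{\rm pp}}(V_{\om,\F})$ \emph{if and only if} $\eta \neq 0$, i.e.\ $\om(u^*u) \neq 0$; in the spectral-measure language, since $u^*u \geq \idd$ is not available here, I would instead use that for $\l = e^{-\imath\th}$ one has $\m_{\xi_\om}(\{\th\}) = \|E^{\om,\F}_\l\xi_\om\|^2$, and relate $E^{\om,\F}_\l\xi_\om$ to $\eta$. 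The nontrivial direction is: if $\l$ is an eigenvalue of $V_{\om,\F}$ with some eigenvector $\zeta$, must $\om(u^*u) > 0$? Here I would exploit that $\pi_\om(u)$ is a co-isometry on $\ch_\om$ (as $uu^* = \idd$), hence a coisometric intertwiner pattern, together with the cyclicity of $\xi_\om$: if $\om(u^*u) = 0$ then $\pi_\om(u)\xi_\om = 0$, and I would propagate this using $V_{\om,\F}$-invariance of the relevant subspace and the fact that vectors of the form $\pi_\om(a)\xi_\om$ are dense, to conclude the $\l$-eigenspace of $V_{\om,\F}$ is trivial. The condition $\xi_\om \perp \pi_\om(u^*u)\ch_\om$ is then just a restatement of $\langle \pi_\om(u^*u)\xi_\om, \xi_\om\rangle = \om(u^*u) = 0$ once one notes $\pi_\om(u^*u)$ is a projection-like positive element and $\xi_\om$ is cyclic — actually $u^*u$ is a projection since $u$ is a partial isometry, so $\xi_\om \perp \pi_\om(u^*u)\ch_\om \iff \pi_\om(u^*u)\xi_\om = 0 \iff \om(u^*u) = 0 \iff \pi_\om(u)\xi_\om = 0$. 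Taking the union over all $\om \in \cs(\ga)^\F$ gives: $\l \notin \s^{(\mathop{\rm ph, f})}_{\mathop{\rm pp}}(\F)$ iff for every $\om$ the eigenvector $\eta$ vanishes iff for every $\om$, $\xi_\om \perp \pi_\om(u^*u)\ch_\om$.

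The main obstacle I anticipate is the nontrivial direction of the equivalence in (ii): showing that vanishing of $\pi_\om(u)\xi_\om$ forces the \emph{entire} $\l$-eigenspace of $V_{\om,\F}$ to be zero, not merely that this one natural candidate vector is zero. One needs to rule out the existence of some other $\l$-eigenvector $\zeta \in \ch_\om$ coming from a limit of $\pi_\om(a)\xi_\om$'s. I expect this to follow from writing $\zeta = \lim \pi_\om(a_j)\xi_\om$, applying $V_{\om,\F}^n$, using $\F^n(a) u = \l^n (\cdot)$-type relations together with $\F(u^*u)=u^*u$ (as $u^*u$ is a fixed projection), and a Cauchy–Schwarz / positivity estimate bounding $\|\pi_\om(\F^n(a_j))\xi_\om - \l^n \pi_\om(a_j)\xi_\om\|$ in terms of quantities that force $\zeta$ to lie in the range of the zero operator $\pi_\om(u)$ restricted appropriately; alternatively, one invokes that the $\l$-eigenspace of an isometry $V_{\om,\F}$ is generated, via the algebra, by images of $\ga_\l$ — a structural fact in the spirit of \cite{NSZ}, Lemma 2.1, or Proposition \ref{gnsc} — so that it is nonzero precisely when $\pi_\om(\ga_\l)\xi_\om \neq 0$, and for $\ga_\l \ni u$ with $u$ a co-isometry this reduces to $\om(u^*u) \neq 0$.
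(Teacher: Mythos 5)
Part (i) and the forward implication of (ii) in your proposal are correct and essentially identical to the paper's argument: $\pi_\om(u)\xi_\om$ is always a candidate $\l$-eigenvector of $V_{\om,\F}$, its squared norm is $\om(u^*u)$, and since $u^*u$ is a projection when $uu^*=\idd$, your chain of equivalences $\xi_\om\perp\pi_\om(u^*u)\ch_\om\iff\om(u^*u)=0\iff\pi_\om(u)\xi_\om=0$ is fine.

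The converse implication of (ii) is where there is a genuine gap; you have correctly located the obstacle but not overcome it. You aim at the per-state equivalence ``$\l\in\s^{\mathop{\rm ph}}_{\mathop{\rm pp}}(V_{\om,\F})$ iff $\pi_\om(u)\xi_\om\neq0$'' for each \emph{fixed} $\om$, whose hard half requires that the entire $\l$-eigenspace of $V_{\om,\F}$ vanish once the single vector $\pi_\om(u)\xi_\om$ does. Neither of your suggested routes delivers this: the estimate-based sketch is not carried out, and the structural fact you hope to invoke --- that the $\l$-eigenspace of $V_{\om,\F}$ is generated by images of $\ga_\l$ --- is false in general. Indeed the paper stresses that $\s^{\mathop{\rm ph}}_{\mathop{\rm pp}}(\F)\subsetneq\s^{\mathop{\rm ph}}_{\mathop{\rm pp}}(V_{\f,\F})$ can occur, so $V_{\f,\F}$ may have peripheral eigenvectors that do not arise from $\pi_\f(\ga_\l)\xi_\f$ at all; Proposition \ref{gnsc} only applies to uniquely ergodic systems and says nothing about eigenvectors of $V_{\f,\F}$. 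The correct argument does not fix $\om$: given an invariant $\f$ and a unit $\l$-eigenvector $\xi\in\ch_\f$ of $V_{\f,\F}$, set $\eta:=\pi_\f(u^*)\xi$. Since $uu^*=\idd$ one has $\|\eta\|=1$ and $\pi_\f(u)\eta=\xi$, and the intertwining relation gives $V_{\f,\F}\eta=\eta$. The vector state $\om:=\langle\pi_\f(\cdot)\eta,\eta\rangle$ is then a \emph{new} invariant state whose covariant GNS representation is the compression to $\overline{\pi_\f(\ga)\eta}$ with cyclic vector $\eta$, so that $\pi_\om(u)\xi_\om=\xi\neq0$, contradicting the orthogonality hypothesis applied to this $\om$. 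In other words, the statement is saved by the quantifier ``for each $\om\in\cs(\ga)^\F$'', which lets one pass to a state manufactured from the eigenvector via the co-isometry, rather than by controlling the eigenspace of a fixed $V_{\om,\F}$.
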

\begin{proof}
(i) Suppose $u\in\ga_\l$ is an isometry. Then for each invariant state $\om$, $\pi_\om(u)\xi_\om$ is a non null eigenvector of $V_{\om,\F}$ corresponding to the eigenvalue $\l$, and thus $\l\in\s^{(\mathop{\rm ph, f})}_{\mathop{\rm pp}}(\F)$.

(ii) For $\l\in\bt$, suppose that $\l\notin\s^{(\mathop{\rm ph, f})}_{\mathop{\rm pp}}(\F)$ and 
$\om\in\cs(\ga)^\F$. Since $\pi_\om(u)\xi_\om$ is an eigenvector of $V_{\om,\F}$ corresponding to the eigenvalue $\l$, it must be zero. But this simply means that $\xi_\om$ is orthogonal to all vectors corresponding to the initial subspace $\pi_\om(u^*u)\ch_\om$ of $\pi_\om(u)$.

Conversely, suppose $\xi_\om\perp\pi_\om(u^*u)\ch_\om$ for each invariant state $\om$ and $\l\in\s^{(\mathop{\rm ph, f})}_{\mathop{\rm pp}}(\F)$. The latter condition means that there exists an invariant state $\f$ and a norm $1$ eigenvector $\xi\in\ch_\f$ of $V_{\f,\F}$ corresponding to the eigenvalue $\l$. On the other hand, with $\eta:=\pi_\om(u^*)\xi$ we have $\xi:=\pi_\f(u)\eta$. In addition, $\eta$ is invariant for $V_{\f,\F}$, and $\|\eta\|=1$ because $u$ is a co-isometry. Consider the cyclic projection $P\in\pi_\f(\ga)'$ onto the subspace $\pi_\f(\ga)\eta$, together with the vector state $\om:=\langle\pi_\f(\,{\bf\cdot}\,)\eta,\eta\rangle$ on $\ga$ generated by $\eta$. It is invariant because $\eta$ is an invariant vector for $V_{\f,\F}$. Since $P$ commutes also with $V_{\f,\F}$, we recognise that the covariant GNS representation $\big(\ch_\om,\pi_\om, V_{\om,\F},\xi_\om\big)$
coincides, up to unitary equivalence, with $\big(P\ch_\f,P\pi_\f, PV_{\f,\F},\eta\big)$. Therefore, firstly
$$
\xi_\om=\eta=V_{\f,\F}\eta=V_{\f,\F}P\eta=(PV_{\f,\F})\eta=V_{\om,\F}\xi_\om\,.
$$
Secondly, we obtain the contradiction
$$
0\neq\xi=\pi_\f(u)\eta=(P\pi_\f(u))\eta=\pi_\om(u)\xi_\om=0\,,
$$
where the last equality comes from our assumption $\pi_\om(u)\xi_\om=0$.
\end{proof}

The key-point of our analysis is the following simple generalisation of Lemma 2.1 in \cite{R} and Lemma 1 in \cite{F4}, of which we report the details of the proof for the convenience of the reader.
\begin{lem}
\label{mle}
Consider the $C^*$-dynamical system $(\ga,\F)$, and a sequence of states 
$\{\om_n\}_{n\in\bn}\subset\cs(\ga)$. Then for each $a\in\ga$ and $\l=e^{-\imath\th}$, there exists $\om\in\cs(\ga)^\F$ such that
$$
\m_{\pi_\om(a)\xi_\om}(\{\th\})^{1/2}\geq\limsup_n\frac1{n}\bigg|\sum_{k=0}^{n-1}\om_n\big(\F^k(a)\big)\l^{-k}\bigg|\,.
$$
\end{lem}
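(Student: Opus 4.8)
The plan is to reduce the statement to a statement about a single invariant state obtained as a weak-$*$ limit of Ces\`aro averages of the $\om_n$, and then to identify the quantity $\limsup_n\frac1n\big|\sum_{k=0}^{n-1}\om_n(\F^k(a))\l^{-k}\big|$ with the mass that the spectral measure $\m_{\pi_\om(a)\xi_\om}$ of $V_{\om,\F}$ puts at the point $\th$. First I would, for each $N$, form the averaged functional $\mu_N:=\frac1N\sum_{k=0}^{N-1}\l^{-k}\,\om_N\circ\F^k$ (a bounded linear functional of norm $\le 1$, not a priori a state since $\l^{-k}$ is complex), and more relevantly consider the genuine states $\nu_N:=\frac1N\sum_{k=0}^{N-1}\om_N\circ\F^k\in\cs(\ga)$. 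By weak-$*$ compactness of $\cs(\ga)$, pass to a subnet along which $\nu_N\to\om$; the standard telescoping estimate $\|\om\circ\F-\om\|=\lim\|\nu_N\circ\F-\nu_N\|\le\lim\frac2N\|a\|_{\ga}\cdot\!$-type bound (independent of $a$, using $\|\F\|=1$) shows $\om\in\cs(\ga)^\F$.

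The main work is to relate $\limsup_n\frac1n|\sum_{k=0}^{n-1}\om_n(\F^k(a))\l^{-k}|$ to the invariant state $\om$ just constructed, and this is the step I expect to be the main obstacle, because the subsequence realizing the $\limsup$ and the subnet producing $\om$ need not be the same, and passing from $\om_n(\F^k(a))$ to $\om(\F^k(a))$ is not a pointwise-in-$k$ convergence one controls uniformly. The device, following Lemma 2.1 of \cite{R} and Lemma 1 of \cite{F4}, is to apply a diagonal/Banach--limit argument directly: choose a subsequence $(n_j)$ along which $\frac1{n_j}|\sum_{k=0}^{n_j-1}\om_{n_j}(\F^k(a))\l^{-k}|$ converges to the $\limsup$, and simultaneously (refining further and using weak-$*$ compactness) arrange that for the doubly-indexed states $\om_{n_j}\circ\F^m$ one has control of the Ces\`aro-averaged behaviour; then build $\om$ as a weak-$*$ cluster point of $\frac1{n_j}\sum_{k=0}^{n_j-1}\om_{n_j}\circ\F^k$. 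For this $\om$, covariance of the GNS data gives $V_{\om,\F}^k\pi_\om(\idd)\xi_\om=\xi_\om$ and $\langle V_{\om,\F}^k\pi_\om(a)\xi_\om,\xi_\om\rangle=\om(\F^k(a))$ in the limit, so that the Fourier coefficients of the spectral measure $\m_{\pi_\om(a)\xi_\om}$ (in the normalisation $\widehat{\m_\xi}(k)=\langle V_{\om,\F}^k\xi,\xi\rangle$) are exactly $\om(\F^k(a))$.

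Granting this, the conclusion is the classical fact that for a bounded positive Radon measure $\m$ on $\bt$ with Fourier coefficients $\widehat\m(k)$, the Ces\`aro average $\frac1n\sum_{k=0}^{n-1}\widehat\m(k)\l^{-k}$ converges, as $n\to\infty$, to $\m(\{\th\})$ where $\l=e^{-\imath\th}$ (Fej\'er/Wiener; this is exactly what underlies the identity $\m_\xi(\{\th\})=\|E^{\f,\F}_\l\xi\|^2$ recalled in the preliminaries). Hence
\[
\m_{\pi_\om(a)\xi_\om}(\{\th\})=\lim_{n\to+\infty}\frac1n\sum_{k=0}^{n-1}\om(\F^k(a))\l^{-k},
\]
and along the chosen subsequence this equals $\lim_j\frac1{n_j}\sum_{k=0}^{n_j-1}\om_{n_j}(\F^k(a))\l^{-k}=\limsup_n\frac1n|\sum_{k=0}^{n-1}\om_n(\F^k(a))\l^{-k}|$ (the modulus is harmless: replace $a$ by $\zeta a$ for a suitable unimodular $\zeta$, or note $\m_{\pi_\om(\zeta a)\xi_\om}(\{\th\})=|\zeta|^{?}$-type scaling does not apply since $\m$ is quadratic in $\xi$ — instead use $\m_{\pi_\om(a)\xi_\om}(\{\th\})^{1/2}=\|E^{\om,\F}_\l\pi_\om(a)\xi_\om\|\ge|\langle E^{\om,\F}_\l\pi_\om(a)\xi_\om,\xi_\om\rangle|$, and this inner product is precisely the limit of $\frac1n\sum_{k=0}^{n-1}\om(\F^k(a))\l^{-k}$). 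Taking the square root gives the asserted inequality $\m_{\pi_\om(a)\xi_\om}(\{\th\})^{1/2}\ge\limsup_n\frac1n|\sum_{k=0}^{n-1}\om_n(\F^k(a))\l^{-k}|$. The inequality (rather than equality) is exactly what the Cauchy--Schwarz step $\|E_\l\pi_\om(a)\xi_\om\|\ge|\langle E_\l\pi_\om(a)\xi_\om,\xi_\om\rangle|$ costs us, and it is all that is needed downstream.
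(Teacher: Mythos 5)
There is a genuine gap, and it sits exactly at the step you yourself flag as ``the main obstacle'': you never actually overcome it. Your plan is to build the invariant state $\om$ as a weak-$*$ cluster point of the \emph{untwisted} Ces\`aro averages $\frac1{n_j}\sum_{k=0}^{n_j-1}\om_{n_j}\circ\F^k$, and then to bound
$\m_{\pi_\om(a)\xi_\om}(\{\th\})^{1/2}=\|E^{\om,\F}_\l\pi_\om(a)\xi_\om\|\ge|\langle E^{\om,\F}_\l\pi_\om(a)\xi_\om,\xi_\om\rangle|=\big|\lim_n\frac1n\sum_{k=0}^{n-1}\l^{-k}\om(\F^k(a))\big|$.
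Every step in this chain is individually correct, but the asserted identification of the last quantity with $\limsup_n\frac1n\big|\sum_{k=0}^{n-1}\l^{-k}\om_n(\F^k(a))\big|$ is false in general, and no refinement of subsequences can repair it, because the right-hand side is simply not a function of the cluster point $\om$ alone. Concretely: take $\ga=C(\bt)$, $\F(f)=f\circ R_\a$ the irrational rotation, $a(z)=z$, $\l=e^{2\pi\imath\a}$, and $\om_n=\d_1$ for all $n$. Then $\l^{-k}\om_n(\F^k(a))=1$ for every $k$, so the $\limsup$ equals $1$; but the only invariant state is Haar measure $\om$, for which $\om(\F^k(a))=\l^k\om(a)=0$, so your lower bound $|\langle E_\l\pi_\om(a)\xi_\om,\xi_\om\rangle|$ is $0$. (The lemma itself holds here, since $\|E_\l\pi_\om(a)\xi_\om\|=\|a\|_{L^2}=1$; it is your pairing against $\xi_\om$ that throws away all the mass.) Also, as a side error, the Fourier coefficients of $\m_{\pi_\om(a)\xi_\om}$ are $\om(a^*\F^k(a))$, not $\om(\F^k(a))$, though you implicitly switch to the correct quantity $\langle V^k\pi_\om(a)\xi_\om,\xi_\om\rangle$ later.

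The missing idea is the Wiener--Wintner/skew-product device of Robinson, which is precisely how the paper proceeds: pass to $C(\bt;\ga)\cong C(\bt)\otimes\ga$ with the twisted endomorphism $\widetilde{\F}(f)(s)=\F(f(s+\th))$, so that the $\l$-twisted averages $\frac1n\sum_k\l^{-k}\om_n(\F^k(a))$ become the values $\widetilde{\om}_n(g)$ of genuine states $\widetilde{\om}_n=\frac1n\sum_k(\d_0\otimes\om_n)\circ\widetilde{\F}^k$ at the single element $g(s)=ae^{\imath s}$. A weak-$*$ cluster point $\widetilde{\om}$ along the subsequence realising the $\limsup$ is then honestly $\widetilde{\F}$-invariant and satisfies $|\widetilde{\om}(g)|=\limsup_n\frac1n|\sum_k\l^{-k}\om_n(\F^k(a))|$; the Cauchy--Schwarz step $\|E^{\widetilde{\om},\widetilde{\F}}_1\pi_{\widetilde{\om}}(g)\xi_{\widetilde{\om}}\|\ge|\widetilde{\om}(g)|$ now retains the twisted information, and the identity $\m_{\pi_{\widetilde{\om}}(g)\xi_{\widetilde{\om}}}(\{0\})=\m_{\pi_\om(a)\xi_\om}(\{\th\})$, with $\om$ the marginal of $\widetilde{\om}$ on constants (which coincides with your $\om$), transfers the bound back to the original system. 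In short: your construction of the invariant state is the right one, but the lower bound on the atom must be extracted in the enlarged system, where the phase $e^{\imath ks}$ is carried along with $\F^k$; pairing against $\xi_\om$ downstairs cannot see it.
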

\begin{proof}
With $\l=e^{-\imath\th}$ and $\bt=[0,2\pi)$, consider the $C^*$-tensor product $C(\bt)\otimes\ga\equiv C(\bt;\ga)$ together with the $*$-homomorphism $\widetilde{\F}: C(\bt;\ga)\to C(\bt;\ga)$ given by
$$
\widetilde{\F}(f)(s):=\F\big(f(s+\th)\big)\,. \quad f\in C(\bt;\ga)\,.
$$

For $\{\om_n\}_{n\in\bn}\subset\cs(\ga)$, let $\{\widetilde{\om}_n\}_{n\in\bn}\subset\cs(C(\bt;\ga))$ be the sequence of states given by
\begin{align*}
\widetilde{\om}_n(f):=&\bigg(\frac1{n}\sum_{k=0}^{n-1}(\d_{0}\otimes \om_n)\circ\widetilde{\F}^k\bigg)(f)\\
=&\bigg(\frac1{n}\sum_{k=0}^{n-1}\d_{k\th}\otimes\big(\om_n\circ\F^k\big)\bigg)(f)\\
=&\frac1{n}\sum_{k=0}^{n-1}\om_n\big(\F^k(f(k\th))\big)\,.
\end{align*}
Notice that for the function $g(s):=ae^{\imath s}\in C(\bt;\ga)$,
$$
\widetilde{\om}_n(g)=\frac1{n}\sum_{k=0}^{n-1}\om_n\big(\F^k(a)\big)\l^{-k}\,.
$$

Let $\{n_j\}_{j\in\bn}\subset\bn$ be a subsequence such that
$$
\limsup_n\frac1{n}\bigg|\sum_{k=0}^{n-1}\om_n\big(\F^k(a)\big)\l^{-k}\bigg|
=\lim_j\frac1{n_j}\bigg|\sum_{k=0}^{n_j-1}\om_{n_j}\big(\F^k(a)\big)\l^{-k}\bigg|\,,
$$
and consider any $*$-weak limit point $\widetilde{\om}$ of the sequence $\{\widetilde{\om}_{n_j}\}_{j\in\bn}$ which exists by the Banach-Alaoglu Theorem ({\it e.g.} \cite{RS}, Theorem IV.21). 
By passing to a subsequence if necessary,
we get
$$
\big|\widetilde{\om}(f)\big|=\lim_j\frac1{n_j}\bigg|\sum_{k=0}^{n_j-1}\om_{n_j}\big(\F^k(a)\big)\l^{-k}\bigg|\,.
$$

Let $\om\in\cs(\ga)$ be the marginal of $\widetilde{\om}$ defined on constant functions $f_a(s):=a$ by 
$$
\om(a):=\widetilde{\om}(f_a)\,,\quad a\in\ga\,.
$$
By construction, $\widetilde{\om}$ is invariant under $\widetilde{\F}$. Therefore, $\om$ is invariant under $\F$ as well.

Let $\big(\ch_{\widetilde{\om}},V_{\widetilde{\om},\widetilde{\F}},\pi_{\widetilde{\om}},\xi_{\widetilde{\om}}\big)$ be the covariant GNS representation associated to $\widetilde{\om}$.
By computing as in Lemma 2.1 of \cite{R}, we then conclude for the spectral measures associated to $V_{\widetilde{\om},\widetilde{\F}}$ and $V_{\om,\F}$,
$$
\m_{\pi_{\widetilde{\om}}(f)\xi_{\widetilde{\om}}}(\{0\})=\m_{\pi_\om(a)\xi_\om}(\{\th\})\,.
$$
Therefore, with $P_{\rm const}\in\cb(\ch_{\widetilde{\om}})$ the orthogonal projections onto the one dimensional subspace $\bc\xi_{\widetilde\om}$,
\begin{align*}
\m_{\pi_\om(a)\xi_\om}(\{\th\})^{1/2}=&\m_{\pi_{\widetilde{\om}}(f)\xi_{\widetilde{\om}}}(\{0\})^{1/2}=\big\|E^{\widetilde{\om},\widetilde{\F}}_1\pi_{\widetilde{\om}}(f)\xi_{\widetilde{\om}}\big\|
\geq\|P_{\rm const}\big(\pi_{\widetilde{\om}}(f)\xi_{\widetilde{\om}}\big)\|\\
=&\big|\widetilde{\om}(f)\big|
=\limsup_n\frac1{n}\bigg|\sum_{k=0}^{n-1}\om_n\big(\F^k(a)\big)\l^{-k}\bigg|\,.
\end{align*}
\end{proof}
The main result of the present section is the following
\begin{thm}
\label{muewrt}
Let $(\ga,\F)$ be a $C^*$-dynamical system. Fix 
$\l\in\bt\smallsetminus\s^{(\mathop{\rm ph, f})}_{\mathop{\rm pp}}(\F)$. Then for each $a\in\ga$,
$$
\lim_{n\to+\infty}\frac1{n}\sum_{k=0}^{n-1}\l^{-k}\F^k(a)=0\,,
$$
in the norm topology of $\ga$.
\end{thm}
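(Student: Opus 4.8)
The plan is to obtain the statement as a fairly direct consequence of Lemma~\ref{mle}. Fixing $a\in\ga$ and $\l=e^{-\imath\th}\in\bt\smallsetminus\s^{(\mathop{\rm ph, f})}_{\mathop{\rm pp}}(\F)$, the idea is: for each $n$ produce a state that almost detects the norm of $M_{a,\l}(n)$, feed the resulting sequence of states into Lemma~\ref{mle}, and observe that the spectral-measure mass it produces is forced to vanish by the hypothesis on $\l$. The only ingredient beyond Lemma~\ref{mle} is the elementary estimate
$$
\|b\|\leq 4\,\sup\big\{|\psi(b)|\mid\psi\in\cs(\ga)\big\}\,,\qquad b\in\ga\,,
$$
which I would justify by choosing, via Hahn--Banach, a norm-one functional $\psi_0\in\ga^*$ with $|\psi_0(b)|=\|b\|$, decomposing $\psi_0=\psi_1-\psi_2+\imath\psi_3-\imath\psi_4$ with $\psi_j\geq0$ and $\|\psi_j\|\leq1$ (split $\psi_0$ into self-adjoint and skew-adjoint parts and apply the Jordan decomposition to each), and normalising the non-zero pieces.

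Using this estimate, for every $n\in\bn\smallsetminus\{0\}$ I would pick a state $\om_n\in\cs(\ga)$ with
$$
\big\|M_{a,\l}(n)\big\|\leq 4\,\big|\om_n\big(M_{a,\l}(n)\big)\big|+\frac1{n}\,,
$$
choosing $\om_0$ arbitrarily. Since $\om_n\big(M_{a,\l}(n)\big)=\frac1{n}\sum_{k=0}^{n-1}\om_n\big(\F^k(a)\big)\l^{-k}$, applying Lemma~\ref{mle} to the sequence $\{\om_n\}_{n\in\bn}$ and the element $a$ yields an invariant state $\om\in\cs(\ga)^\F$ with
$$
\m_{\pi_\om(a)\xi_\om}(\{\th\})^{1/2}\geq\limsup_n\big|\om_n\big(M_{a,\l}(n)\big)\big|\,.
$$

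To finish, I would invoke the assumption $\l\notin\s^{(\mathop{\rm ph, f})}_{\mathop{\rm pp}}(\F)$: since $\om\in\cs(\ga)^\F$, this forces $\l\notin\s^{\mathop{\rm ph}}_{\mathop{\rm pp}}(V_{\om,\F})$, hence $E^{\om,\F}_\l=0$ and therefore $\m_{\pi_\om(a)\xi_\om}(\{\th\})=\big\|E^{\om,\F}_\l\pi_\om(a)\xi_\om\big\|^2=0$. The displayed inequality then gives $\limsup_n\big|\om_n\big(M_{a,\l}(n)\big)\big|=0$, and combining with the choice of the $\om_n$ we obtain $\limsup_n\|M_{a,\l}(n)\|=0$, i.e. the asserted norm convergence to $0$. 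I do not expect a genuine obstacle here: all the substance is already in Lemma~\ref{mle}, and the only point requiring a (standard) argument is the passage from the norm back to evaluations against states; note that any fixed constant in place of $4$ would do equally well.
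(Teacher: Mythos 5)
Your proof is correct and follows essentially the same route as the paper: the paper also deduces the theorem from Lemma~\ref{mle}, arguing by contradiction that a failure of norm convergence would yield states $\om_n$ with $\limsup_n|\om_n(M_{a,\l}(n))|>0$ and hence an invariant state with a positive atom at $\th$, contradicting $\l\notin\s^{(\mathop{\rm ph, f})}_{\mathop{\rm pp}}(\F)$. The only difference is cosmetic: you make explicit, via the estimate $\|b\|\leq 4\sup_{\psi\in\cs(\ga)}|\psi(b)|$, the selection of the states $\om_n$ that the paper leaves implicit.
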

\begin{proof}
Let $\l\in\bt\smallsetminus\s^{(\mathop{\rm ph, f})}_{\mathop{\rm pp}}(\F)$, and suppose that there exists $a\in\ga$ such that $\frac1{n}\sum_{k=0}^{n-1}\F^k(a)\l^{-k}\nrightarrow0$ in the norm topology. Then it would exist a sequence of states  
$\{\om_n\}_{n\in\bn}\subset\cs(\ga)$ such that $\limsup_n\frac1{n}\left|\sum_{k=0}^{n-1}\om_n\big(\F^k(a)\big)\l^{-k}\right|>0$. By Lemma \ref{mle}, for $\l=e^{-\imath\th}$ we would find an invariant state $\om$ such that
$$
\m_{\pi_\om(a)\xi_\om}(\{\th\})^{1/2}\geq\limsup_n\frac1{n}\bigg|\sum_{k=0}^{n-1}\om_n\big(\F^k(a)\big)\l^{-k}\bigg|>0\,,
$$
which would contradict $\l\notin\s^{(\mathop{\rm ph, f})}_{\mathop{\rm pp}}(\F)$.
\end{proof}

\section{uniquely ergodic $C^*$ dynamical systems with respect to the fixed-point subalgebra}

In the present section, we study the convergence of Cesaro averages $M_{a,\l}$, $\l\in\bt$, in \eqref{malam} for $C^*$-dynamical systems $(\ga,\F)$, made of a unital $C^*$-algebra and a unital $*$-homomorphism, enjoying the property of unique ergodicity w.r.t. the fixed-point subalgebra. 
  
The convergence of such averages $M_{a,\l}$ is in general not granted, even for  $C^*$ dynamical systems enjoying such a strong ergodicity property when, for the fixed-point $*$-subalgebra, $\ga_1\supsetneq\bc\idd$. However, we can provide some useful criteria which assure the convergence, and exhibit examples for which such results apply.
\begin{thm}
\label{isoco}
Let $(\ga.\F)$ be a uniquely ergodic $C^*$-dynamical system w.r.t. the fixed-point subalgebra. For $\l\in\s^{\mathop{\rm ph}}_{\mathop{\rm pp}}(\F)$, suppose that 
$u\in\ga_\l$ is an isometry or a co-ismetry. Then
\begin{itemize}
\item[(i)] $\ga\ni x\mapsto E_\l(x):=E_1(xu^*)u\,\,\in\ga_\l$ (isometry case)
\item[(ii)] $\ga\ni x\mapsto E_\l(x):=uE_1(u^*x)\,\,\in\ga_\l$ (co-isometry-case)
\end{itemize}
uniquely define a norm-one projection $E_\l:\ga\rightarrow\ga_\l$ which is independent on the choice of $u$ among the isometries and co-isometries of $\ga_\l$.

In addition, 
$$
\lim_{n\to+\infty}\bigg(\frac1{n}\sum_{k=0}^{n-1}\l^{-k}\F^k(x)\bigg)=E_\l(x)\,,
$$
in the norm topology.
\end{thm}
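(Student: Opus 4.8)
The plan is to reduce the twisted Ces\`aro average $M_{x,\l}(n):=\frac1n\sum_{k=0}^{n-1}\l^{-k}\F^k(x)$ to an \emph{ordinary} Ces\`aro average of a suitably modified element, by multiplying by the (co-)isometry $u$, and then to invoke the defining property of unique ergodicity w.r.t.\ the fixed-point subalgebra: $\frac1n\sum_{k=0}^{n-1}\F^k$ converges point-wise in norm to the $\F$-invariant conditional expectation $E_1:\ga\to\ga_1$.

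First I would record the elementary identities. Since $u\in\ga_\l$ and $\F$ is a unital $*$-homomorphism, $\F(u)=\l u$ and $\F(u^*)=\F(u)^*=\bar\l u^*=\l^{-1}u^*$, whence $\F^k(u)=\l^k u$ and $\F^k(u^*)=\l^{-k}u^*$. In the isometry case this gives $\F^k(xu^*)=\l^{-k}\F^k(x)\,u^*$, hence $\frac1n\sum_{k=0}^{n-1}\F^k(xu^*)=M_{x,\l}(n)\,u^*$; in the co-isometry case $\F^k(u^*x)=\l^{-k}u^*\F^k(x)$, hence $\frac1n\sum_{k=0}^{n-1}\F^k(u^*x)=u^*\,M_{x,\l}(n)$. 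Applying $\frac1n\sum_{k=0}^{n-1}\F^k(\,\cdot\,)\to E_1(\,\cdot\,)$ in norm, and then multiplying by $u$ on the side that restores the identity — on the right, using $u^*u=\idd$, in case (i), on the left, using $uu^*=\idd$, in case (ii) — yields
\[
M_{x,\l}(n)\longrightarrow E_1(xu^*)\,u\quad\text{in case (i)},\qquad M_{x,\l}(n)\longrightarrow u\,E_1(u^*x)\quad\text{in case (ii)},
\]
in norm. This is exactly the asserted limit; moreover, since the left-hand side does not involve $u$, it shows at once that $E_1(xu^*)u$ (resp.\ $uE_1(u^*x)$) is independent of the chosen isometry (resp.\ co-isometry) in $\ga_\l$, and that the two formulas agree whenever $\ga_\l$ contains both, every value being equal to $\lim_n M_{x,\l}(n)$. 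Thus the well-definedness and independence assertions are automatic once the limit has been identified, and need not be checked separately.

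It then remains to verify the properties of $E_\l:=\lim_n M_{x,\l}(n)$. Since $E_1$ takes values in the fixed-point subalgebra $\ga_1$, and $\ga_1\,\ga_\l\subseteq\ga_\l$, $\ga_\l\,\ga_1\subseteq\ga_\l$ because $\F$ is multiplicative, the two formulas give $E_\l(\ga)\subseteq\ga_\l$. If $y\in\ga_\l$ then $\F^k(y)=\l^k y$, so $M_{y,\l}(n)=y$ for every $n$ and $E_\l(y)=y$; together with $E_\l(\ga)\subseteq\ga_\l$ this gives $E_\l^2=E_\l$ and range exactly $\ga_\l$. Finally $\|E_\l(x)\|=\lim_n\|M_{x,\l}(n)\|\le\|x\|$ since $\F$ is contractive, while $E_\l(u)=u$ with $\|u\|=1$; hence $\|E_\l\|=1$. (Alternatively, $\F(M_{x,\l}(n))-\l M_{x,\l}(n)=\l\,\frac{\l^{-n}\F^n(x)-x}{n}$, of norm $\le 2\|x\|/n\to0$, so passing to the limit and using continuity of $\F$ gives $\F(E_\l(x))=\l E_\l(x)$ directly.)

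The one point requiring care — and the only place where the isometry/co-isometry hypothesis is actually used — is the \emph{side} on which the auxiliary element is formed and on which $u$ is reinserted: it is precisely $u^*u=\idd$ (resp.\ $uu^*=\idd$) that allows the passage from norm convergence of $M_{x,\l}(n)u^*$ (resp.\ $u^*M_{x,\l}(n)$) to that of $M_{x,\l}(n)$ on all of $\ga$. For a merely partial isometry one would control only the compression $M_{x,\l}(n)\,u^*u$, so this argument genuinely needs $u$ to be an isometry or a co-isometry; everything else is bookkeeping with multiplicativity of $\F$ and continuity of left/right multiplication.
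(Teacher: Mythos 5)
Your proposal is correct and follows essentially the same route as the paper: reduce $\frac1n\sum_k\l^{-k}\F^k(x)$ to the untwisted average of $xu^*$ (resp.\ $u^*x$) via $\F^k(u^*)=\l^{-k}u^*$, apply the norm convergence to $E_1$ guaranteed by unique ergodicity w.r.t.\ the fixed-point subalgebra, and reinsert $u$ using $u^*u=\idd$ (resp.\ $uu^*=\idd$); independence of $u$ and the projection/norm-one properties then follow exactly as in the paper. The extra observations (range contained in $\ga_\l$ via $\ga_1\ga_\l\subseteq\ga_\l$, and the direct check that $\F(E_\l(x))=\l E_\l(x)$) are sound but not a different method.
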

\begin{proof}
Suppose that $u\in\ga_\l$ is an isometry. We get
\begin{align*}
E_1(xu^*)u=&\lim_n\bigg(\frac1{n}\sum_{k=0}^{n-1}\F^k(xu^*)\bigg)u
=\lim_n\bigg(\frac1{n}\sum_{k=0}^{n-1}\F^k(x)\F^k(u^*)\bigg)u\\
=&\lim_n\bigg(\frac1{n}\sum_{k=0}^{n-1}\l^{-k}\F^k(x)u^*\bigg)u
=\lim_n\bigg(\frac1{n}\sum_{k=0}^{n-1}\l^{-k}\F^k(x)\bigg)u^*u\\
=&\lim_n\bigg(\frac1{n}\sum_{k=0}^{n-1}\l^{-k}\F^k(x)\bigg)\,.
\end{align*}

Since the r.h.s. does not depend on the isometry $u\in\ga_\l$, the l.h.s. gives rise to a linear contraction independent on the choice of the isometry $u$. The same holds true when $u$ is a co-isometry with 
$E_\l=uE_1(u^*\,{\bf\cdot}\,)$.

Now we show that $E_\l$, which is defined provided $\ga_\l$ contains either an isometry or a co-isometry as we have just shown, is a projection onto $\ga_\l$, and thus $\|E_\l\|=1$. Indeed, we firstly suppose $x\in\ga_\l$, then
\begin{align*}
E_\l(x)=\lim_n\bigg(\frac1{n}\sum_{k=0}^{n-1}\l^{-k}\F^k(x)\bigg)=\lim_n\bigg(\frac1{n}\sum_{k=0}^{n-1}\l^{-k}\l^{k}\bigg)x=x\,.
\end{align*}
Secondly, for $x\in\ga$, put $y:=E_\l(x)\in\ga_\l$. By the last calculation, 
$$
E_\l(E_\l(x))=E_\l(y)=y=E_\l(x)\,,
$$
and the proof is complete.
\end{proof}
Now we list some immediate consequences of the previous theorem.
\begin{prop}
\label{muewrt0}
Let $\l\in\s^{\mathop{\rm ph}}_{\mathop{\rm pp}}(\F)$ and $u\in\ga_\l$ be an isometry or a co-isometry. Then we get
\begin{itemize}
\item[(i)] isometry case: $u^*E_1(uu^*)u=\idd$, and $E_1(u^*)u=0$ if $\l\neq1$;
\item[(ii)] co-isometry case: $uE_1(u^*u)u^*=\idd$, and $uE_1(u^*)=0$ if $\l\neq1$.
\end{itemize}
\end{prop}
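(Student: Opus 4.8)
The goal is to prove Proposition~\ref{muewrt0}, which asks us to identify what $E_\l$ does on specific elements built from an isometry or co-isometry $u\in\ga_\l$. The plan is to feed these elements into the formulas for $E_\l$ given in Theorem~\ref{isoco} and simplify using the relations $u^*u=\idd$ (isometry) or $uu^*=\idd$ (co-isometry), the fact that $E_1$ is a $\F$-invariant conditional expectation onto $\ga_1=\ga^\F$, and that $\idd\in\ga_1$ so $E_1(\idd)=\idd$.

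For the isometry case (i), first I would compute $E_\l(u)$. By Theorem~\ref{isoco}(i), $E_\l(u)=E_1(uu^*)u$. Since $u\in\ga_\l$, we also know from the last lines of Theorem~\ref{isoco} that $E_\l(u)=u$ (as $u$ is an eigenvector). Hence $E_1(uu^*)u=u$; multiplying on the left by $u^*$ and using $u^*u=\idd$ yields $u^*E_1(uu^*)u=u^*u=\idd$, the first assertion. For the second, I would compute $E_\l(\idd)$: by definition $E_\l(\idd)=E_1(\idd\cdot u^*)u=E_1(u^*)u$. But $\idd\in\ga_1\subset\ga$, and by the convergence statement $E_\l(\idd)=\lim_n\frac1n\sum_{k=0}^{n-1}\l^{-k}\F^k(\idd)=\lim_n\frac1n\sum_{k=0}^{n-1}\l^{-k}$, which is $0$ whenever $\l\neq1$ (and $\idd$ when $\l=1$). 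Therefore $E_1(u^*)u=0$ for $\l\neq1$. The co-isometry case (ii) is completely symmetric: from Theorem~\ref{isoco}(ii), $E_\l(u)=uE_1(u^*u)$, and since $E_\l(u)=u$ this gives $uE_1(u^*u)=u$; multiplying on the right by $u^*$ and using $uu^*=\idd$ gives $uE_1(u^*u)u^*=uu^*=\idd$. Likewise $E_\l(\idd)=uE_1(u^*\idd)=uE_1(u^*)$, which again equals $0$ for $\l\neq1$ by the same Cesàro computation.

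I do not anticipate a serious obstacle here: the proposition is a direct corollary of Theorem~\ref{isoco}, and each identity follows from plugging in the right element and applying the multiplicativity/contractivity already established. The only mild subtlety is making sure one uses the correct one of the two defining formulas for $E_\l$ consistent with whether $u$ is an isometry or co-isometry, and that the Cesàro sum $\frac1n\sum_{k=0}^{n-1}\l^{-k}$ genuinely tends to $0$ for $\l\in\bt$, $\l\neq1$ --- which is the elementary geometric-series estimate $\big|\frac1n\frac{1-\l^{-n}}{1-\l^{-1}}\big|\le\frac{2}{n|1-\l^{-1}|}\to0$, already implicit in the preliminaries. So the write-up will essentially be: cite Theorem~\ref{isoco} for the two formulas and for $E_\l(u)=u$, substitute $x=u$ and $x=\idd$, and simplify.
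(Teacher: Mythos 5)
Your proposal is correct and follows essentially the same route as the paper: both assertions come from evaluating $E_\l$ at $u$ (using $E_\l(u)=u$ and the explicit formula from Theorem~\ref{isoco}) and at $\idd$ (using the elementary fact that $\frac1n\sum_{k=0}^{n-1}\l^{-k}\to0$ for $\l\in\bt\smallsetminus\{1\}$), with the co-isometry case handled symmetrically. No gaps.
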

\begin{proof}
If $u\in\ga_\l$ is an isometry, we get 
$u=E_\l(u)=E_1(uu^*)u$, and thus $\idd=u^*E_1(uu^*)u$ after multiplying both members by $u^*$ from the left. 

Concerning the second assertion,
$$
0=\lim_n\bigg(\frac1{n}\sum_{k=0}^{n-1}\l^{-k}\bigg)\idd=\lim_n\bigg(\frac1{n}\sum_{k=0}^{n-1}\l^{-k}\F(\idd)^k\bigg)=E_1(u^*)u\,.
$$

The case of a co-isometry $u\in\ga_\l$ follows analogously.
\end{proof}
\begin{rem}
If $(\ga,\F,\f)$ is uniquely ergodic and $\l\in\s^{\mathop{\rm ph}}_{\mathop{\rm pp}}(\F)$ with $u\in\ga_\l$, then:
\begin{itemize}
\item[(i)] if $\l\neq1$ then $\f(u)=0$,
\item[(ii)] $\f(xu^*)u=\f(u^*x)u$, $x\in\ga$.
\end{itemize}
\end{rem}
\noindent\hspace{12.33 cm}$\square$
\begin{prop}
Let $(\ga,\F)$ be a $C^*$-dynamical system, uniquely ergodic w.r.t. the fixed-point subalgebra. Suppose that $\l\in\s^{\mathop{\rm ph}}_{\mathop{\rm pp}}(\F)$, where
$u\in\ga_\l$ is an isometry or a co-isometry. Then $\l\in\s^{(\mathop{\rm ph, f})}_{\mathop{\rm pp}}(\F)$ as well.
\end{prop}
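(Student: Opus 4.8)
The plan is to reduce, in both the isometric and the co-isometric case, to exhibiting a single invariant state $\om\in\cs(\ga)^\F$ for which $\pi_\om(u)\xi_\om\neq0$. Indeed, once such an $\om$ is produced, $\eta:=\pi_\om(u)\xi_\om$ is a non-zero eigenvector of $V_{\om,\F}$ with eigenvalue $\l$, since $V_{\om,\F}\pi_\om(u)\xi_\om=\pi_\om(\F(u))\xi_\om=\l\,\pi_\om(u)\xi_\om$; hence $\l\in\s^{\mathop{\rm ph}}_{\mathop{\rm pp}}(V_{\om,\F})\subset\s^{(\mathop{\rm ph, f})}_{\mathop{\rm pp}}(\F)$. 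Moreover, since $u^*u$ is a projection, $\|\pi_\om(u)\xi_\om\|^2=\om(u^*u)$, so the requirement $\pi_\om(u)\xi_\om\neq0$ is the same as $\om(u^*u)>0$. (This is, essentially, the right-to-left implication of the already established equivalence ``$\l\notin\s^{(\mathop{\rm ph, f})}_{\mathop{\rm pp}}(\F)$ iff $\xi_\om\perp\pi_\om(u^*u)\ch_\om$ for every invariant $\om$'', which one may cite instead of rerunning this reduction.)

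First I would dispose of the isometry case, which in fact requires no ergodicity hypothesis and was already noted earlier: here $u^*u=\idd$, so $\om(u^*u)=1>0$ for \emph{every} invariant state, and at least one invariant state exists (e.g.\ $\psi\circ E_1$ for any state $\psi$ on $\ga^\F$, using that unique ergodicity w.r.t.\ the fixed-point subalgebra supplies the conditional expectation $E_1$).

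For the co-isometry case I would use Proposition \ref{muewrt0}(ii), which gives $uE_1(u^*u)u^*=\idd$; in particular $b:=E_1(u^*u)$ is a \emph{non-zero} positive element of the $C^*$-subalgebra $\ga^\F$. Pick a state $\psi$ on $\ga^\F$ with $\psi(b)>0$ — possible because $b\geq0$, $b\neq0$ — and set $\om:=\psi\circ E_1\in\cs(\ga)$. Since $E_1=\lim_n\frac1n\sum_{k=0}^{n-1}\F^k$ pointwise in norm and $\|\F^n\|\leq1$, one has $E_1\circ\F=E_1$ (the difference of the two Cesàro sums is $\frac{\F^n(a)-a}{n}\to0$), whence $\om(\F(a))=\psi(E_1(\F(a)))=\psi(E_1(a))=\om(a)$, i.e.\ $\om$ is $\F$-invariant. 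Finally $\om(u^*u)=\psi(E_1(u^*u))=\psi(b)>0$, so $\pi_\om(u)\xi_\om\neq0$ and the reduction above yields $\l\in\s^{(\mathop{\rm ph, f})}_{\mathop{\rm pp}}(\F)$.

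The argument is routine once Theorem \ref{isoco} and Proposition \ref{muewrt0} are available; the only point needing a little attention is the construction of an invariant state not annihilating $u^*u$. There the identity $uE_1(u^*u)u^*=\idd$ is exactly what forces $E_1(u^*u)\neq0$, and the invariance of the candidate state $\psi\circ E_1$ rests on the $\F$-invariance of the conditional expectation $E_1$, both of which are already in hand.
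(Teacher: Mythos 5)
Your argument is correct. The isometry case coincides with the paper's: for \emph{every} invariant state $\om$ one has $\|\pi_\om(u)\xi_\om\|^2=\om(u^*u)=1$, so $\pi_\om(u)\xi_\om$ is a nonzero eigenvector of $V_{\om,\F}$ for $\l$ (and your explicit remark that an invariant state exists, e.g.\ $\psi\circ E_1$, is a point the paper leaves implicit). For the co-isometry case, however, you take a genuinely different route. The paper argues by contradiction via Theorem \ref{muewrt}: since $\F^k(u)=\l^k u$, the average $\frac1n\sum_{k=0}^{n-1}\l^{-k}\F^k(u)$ is constantly equal to $u$, so if $\l\notin\s^{(\mathop{\rm ph, f})}_{\mathop{\rm pp}}(\F)$ it would tend to $0$ in norm, giving $0=uu^*=\idd$. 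That argument is shorter and, as stated, does not even use the unique ergodicity hypothesis. Your proof is instead constructive: you use $uE_1(u^*u)u^*=\idd$ from Proposition \ref{muewrt0}(ii) to see that $b:=E_1(u^*u)$ is a nonzero positive element of $\ga^\F$, choose a state $\psi$ on $\ga^\F$ with $\psi(b)>0$, and check that $\om:=\psi\circ E_1$ is $\F$-invariant (via $E_1\circ\F=E_1$, which you justify correctly) and satisfies $\om(u^*u)>0$; this exhibits a concrete witness state realising $\l$ as an eigenvalue of $V_{\om,\F}$, at the price of genuinely invoking the conditional expectation $E_1$ and hence the unique ergodicity assumption. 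Both proofs are sound; the paper's buys brevity and generality, yours buys an explicit invariant state and ties the statement back to the orthogonality criterion of Proposition 3.2(ii). All the individual steps you use ($V_{\om,\F}\pi_\om(a)\xi_\om=\pi_\om(\F(a))\xi_\om$, positivity and unitality of $E_1$, existence of a state not vanishing on a nonzero positive element) check out.
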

\begin{proof}
The case of the isometry is trivial because, for each $\f\in\cs(\ga)^\F$, $\l$ is an eigenvalue of $V_{\f,\F}$ corresponding to the eigenvector $\pi_\f(u)\xi_\f$ and
$$
\|\pi_\f(u)\xi_\f\|^2=\langle\pi_\f(u)\xi_\f,\pi_\f(u)\xi_\f\rangle=\langle\pi_\f(u^*u)\xi_\f,\xi_\f\rangle=\|\xi_\f\|^2=1\,.
$$
Therefore, $\l\in\s^{(\mathop{\rm ph, f})}_{\mathop{\rm pp}}(\F)$.

Suppose now that $u\in\ga_\l$ is a co-isometry and $\l\notin\s^{(\mathop{\rm ph, f})}_{\mathop{\rm pp}}(\F)$. Then, by Theorem \ref{muewrt}, we obtain the contradiction
$$
0=\lim_n\bigg(\frac1{n}\sum_{k=0}^{n-1}\l^{-k}\F^k(u)\bigg)u^*=\lim_n\bigg(\frac1{n}\sum_{k=0}^{n-1}\l^{-k}\l^k\bigg)uu^*=uu^*=\idd\,.
$$
\end{proof}
We end by noticing that all above results, that is for example the existence of the projection $E_\l$ whenever $\l\in\s^{\mathop{\rm ph}}_{\mathop{\rm pp}}(\F)$, can be extended to the case when $\ga_\l$ contains an invertible operator $a$. Namely, for $a\in\ga$ invertible we can check $\F(a^{-1})=\F(a)^{-1}$ because $\F$ is multiplicative and 
identity-preserving. Hence,  
$a\in\ga_\l$ invertible implies $0\neq a^{-1}\in\ga_{\l^{-1}}$. Therefore, we easily get
$$
aE_1(a^{-1}x)=\lim_{n\to+\infty}\bigg(\frac1{n}\sum_{k=0}^{n-1}\l^{-k}\F^k(x)\bigg)=E_1(xa^{-1})a\,.
$$
The same conclusion can be easily obtained if, for $a_j,b_j\in\ga_\l$, $\idd=\sum_{j=1}^m a_j^*b_j$:
$$
\lim_{n\to+\infty}\bigg(\frac1{n}\sum_{k=0}^{n-1}\l^{-k}\F^k(x)\bigg)=\sum_{j=1}^mE_1(xa_j^*)b_j\,,
$$
or 
$$
\lim_{n\to+\infty}\bigg(\frac1{n}\sum_{k=0}^{n-1}\l^{-k}\F^k(x)\bigg)=\sum_{j=1}^ma_jE_1(b_j^*x)
$$
whenever $\idd=\sum_{j=1}^m a_jb^*_j$. 

The case involving infinite sums might provide the result as well, after solving some technical problems. We hope to return somewhere else on the question relative to the existence, under more general conditions, of the projections $E_\l$.

\section{some examples}

We describe some manageable examples, perhaps of certain interest for the applications to quantum probability, to which the previous results can be applied.
The reader is referred to \cite{CrF2, CrF, F28} for further details. Some other examples arising from the noncommutative 2-torus ({\it e.g.} \cite{DFGR}) will be presented elsewhere.

\subsection{The monotone case}
We consider the $C^*$-dynamical system $(\gpm,s)$ where $\gpm$ is the concrete $C^*$-algebra generated by the identity $I=\idd_\gpm$ and the monotone creators $\{m^\dagger_n\mid n\in\bz\}$ acting on the monotone Fock space $\G_{\rm mon}(\ell^2(\bz))$ on $\ell^2(\bz)$. It has the structure $\gpm=\gpa+\bc I$, where $\gpa$ is the non unital $C^*$-algebra generated by the monotone creators. Therefore, $I\notin\gpa$ and thus the state at infinity $\om_\infty$ is meaningful.
The one-step shift $s$ is defined on generators as $s(m^\dagger_j)=m^\dagger_{j+1}$, $j\in\bz$.

The main properties of $(\gpm,s)$ are summarised as follows:
\begin{itemize}
\item[{\bf-}] for the fixed-point $*$-subalgebra, $\gpm^s=\bc I$,
\item[{\bf-}]  the set of all invariant states 
$$
\cs(\gpm)^s=\big\{(1-t)\om_\gpo+t\om_\infty\mid t\in[0,1]\big\}
$$ 
is the convex combination of the vacuum state $\om_\gpo$ and the state at infinity $\om_\infty$.
\end{itemize}
Therefore, $(\gpm,s)$ cannot be uniquely ergodic w.r.t. the fixed-point subalgebra. Indeed, it can be viewed by direct inspection because
$$
\frac1{n}\sum_{k=0}^{n-1}s^k(m_lm^\dagger_l)=\frac1{n}\sum_{k=0}^{n-1}m_{l+k}m^\dagger_{l+k}\downarrow P_{e_\gpo}\,,
$$
the self-adjoint projection onto the subspace generated by the vacuum vector $e_\gpo$. Such a convergence in the strong operator topology, cannot be in norm.

Obviously, $1$ is contained in both spectra $\s_{\mathop{\rm pp}}(s)$, $\s^{(\rm f)}_{\mathop{\rm pp}}(s)$, and in addition $\s_{\mathop{\rm pp}}(s)=1$. We now show that 
$\s^{(\rm f)}_{\mathop{\rm pp}}(s)=1$, a fact which directly follows by applying the previous Theorem \ref{muewrt}.
\begin{prop}
Let $\l\in\bt\smallsetminus\{1\}$. Then for each $x\in\gpm$,
$$
\lim_{n\to+\infty}\frac1{n}\sum_{k=0}^{n-1}\l^{-k}s^k(x)=0\,,
$$
in the norm topology.
\end{prop}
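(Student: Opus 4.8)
The plan is to invoke Theorem \ref{muewrt} directly, so the only real work is to verify its hypothesis, namely that $\l \in \bt \smallsetminus \s^{(\mathrm{ph,f})}_{\mathrm{pp}}(s)$ for every $\l \in \bt \smallsetminus \{1\}$; equivalently that $\s^{(\mathrm{ph,f})}_{\mathrm{pp}}(s) = \{1\}$. Unwinding the definition of the full peripheral pure-point spectrum, I must show that for \emph{every} invariant state $\f \in \cs(\gpm)^s$ and every $\l \in \bt \smallsetminus \{1\}$, $\l$ is not an eigenvalue of the isometry $V_{\f,s}$ on $\ch_\f$. Since the excerpt records that $\cs(\gpm)^s = \{(1-t)\om_\gpo + t\,\om_\infty \mid t \in [0,1]\}$, it suffices to understand $V_{\f,s}$ for the two extreme points $\om_\gpo$ and $\om_\infty$ and then to handle a general convex combination.

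First I would treat the two extremal invariant states. For the vacuum state $\om_\gpo$, the GNS space is (a subspace of) the monotone Fock space $\G_{\mathrm{mon}}(\ell^2(\bz))$ with cyclic vector the vacuum $e_\gpo$, and the shift $s$ is spatially implemented by the natural shift unitary on $\ell^2(\bz)$; the operator $V_{\om_\gpo,s}$ is then (the restriction of) a shift-type isometry. Its pure-point peripheral spectrum should reduce to $\{1\}$, with the invariant vectors being exactly the multiples of $e_\gpo$ — this is essentially the statement that the shift on $\ell^2(\bz)$ (or on the Fock space built from it) has no peripheral eigenvectors other than the vacuum direction. For the state at infinity $\om_\infty$, one expects $V_{\om_\infty,s} = \idd$ on $\ch_{\om_\infty}$, since $\om_\infty \circ s = \om_\infty$ acts trivially ``at infinity''; then $\s^{\mathrm{ph}}_{\mathrm{pp}}(V_{\om_\infty,s}) = \{1\}$ trivially. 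For a genuine convex combination $\f = (1-t)\om_\gpo + t\,\om_\infty$ with $0 < t < 1$, the covariant GNS representation is unitarily equivalent to a subrepresentation of the direct sum $\big(\ch_{\om_\gpo} \oplus \ch_{\om_\infty},\, V_{\om_\gpo,s} \oplus V_{\om_\infty,s}\big)$ (containing the vector $(\sqrt{1-t}\,\xi_{\om_\gpo}) \oplus (\sqrt{t}\,\xi_{\om_\infty})$), and an eigenvector of $V_{\om_\gpo,s} \oplus V_{\om_\infty,s}$ for an eigenvalue $\l \neq 1$ would have to have a nonzero component that is a peripheral eigenvector in one of the summands, which we have just excluded. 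Hence no such $\l$ lies in $\s^{\mathrm{ph}}_{\mathrm{pp}}(V_{\f,s})$, giving $\s^{(\mathrm{ph,f})}_{\mathrm{pp}}(s) = \{1\}$.

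Having established $\l \notin \s^{(\mathrm{ph,f})}_{\mathrm{pp}}(s)$ for each $\l \in \bt \smallsetminus \{1\}$, I apply Theorem \ref{muewrt} with $\F = s$: for every $x \in \gpm$,
$$
\lim_{n\to+\infty}\frac1{n}\sum_{k=0}^{n-1}\l^{-k}s^k(x) = 0
$$
in the norm topology, which is exactly the assertion.

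The main obstacle is the spectral analysis of $V_{\om_\gpo,s}$ on the monotone Fock space: one needs to confirm that this isometry has no peripheral point spectrum beyond the invariant vacuum direction. Depending on how explicitly the monotone Fock construction and the action of $s$ have been set up in the cited references \cite{CrF2, CrF, F28}, this may either be immediate (the shift on $\ell^2(\bz)$ is well known to be purely continuous apart from its fixed subspace, which here is trivial on $\ell^2(\bz)$ and one-dimensional on the Fock level via $e_\gpo$) or may require a short computation with the explicit form of $V_{\om_\gpo,s}$ on $n$-particle vectors. A clean alternative that sidesteps this entirely is to observe that Proposition \ref{muewrt0}(i)/(ii) and the surrounding discussion are not needed, and instead argue abstractly: any $\l \in \s^{(\mathrm{ph,f})}_{\mathrm{pp}}(s) \cap (\bt \smallsetminus \{1\})$ would, via the eigenvector in some $\ch_\f$, produce nonconvergence of $M_{a,\l}(n)$ for a suitable $a$, contradicting that $\gpm^s = \bc I$ forces the relevant eigenspace structure to be trivial; but making this precise still ultimately reduces to the same spectral fact about the shift, so I would present the direct argument above.
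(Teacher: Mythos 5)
Your proposal is correct and follows essentially the same route as the paper: identify all invariant states as convex combinations of the vacuum state and the state at infinity, check that the covariant GNS isometries $V_{\om_\gpo,s}$ (the monotone second quantisation of the bilateral shift) and $V_{\om_\infty,s}$ (acting on a one-dimensional space) have peripheral point spectrum $\{1\}$, pass to direct sums for the nontrivial convex combinations to conclude $\s^{(\mathop{\rm ph, f})}_{\mathop{\rm pp}}(s)=\{1\}$, and then apply Theorem \ref{muewrt}. The spectral fact you flag as the main obstacle is treated just as briefly in the paper, where it is reduced to the absence of point spectrum of the bilateral shift on the $n$-particle subspaces.
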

\begin{proof}
We start by recalling that $\cs(\gpm)^s$ is the convex combination of the vacuum state and the state at infinity.
For the latter, its GNS covariant representation 
$\big(\ch_{\om_\infty},\pi_{\om_\infty}, V_{\om_\infty,s},\xi_{\om_\infty}\big)$ is nothing but the trivial one $(\bc,\pi,U_1,1)$ on $\bc$, where $\pi(a+bI)=b$ and $U_1$ is the unitary given by the multiplication by 1. Consequently, $\s_{\mathop{\rm pp}}\big(V_{\om_\infty,s}\big)=\{1\}$.

Let $u$ be the unitary implementing the shift on the one-particle subspace $\ell^2(\bz)$.
Define, $V=\G_{\rm mon}(u)$ as $\G_{\rm mon}(u)e_\gpo:=e_\gpo$,
$$
\G_{\rm mon}(u)e_{j_1}\otimes\cdots\otimes e_{j_n}=ue_{j_1}\otimes\cdots\otimes ue_{j_n}=e_{j_1+1}\otimes\cdots\otimes e_{j_n+1}\,.
$$
It is a well defined (because it transforms the increasing sequence $(j_1,\cdots,j_n)$ to the increasing one $(j_1+1,\cdots,j_n+1)$) unitary operator acting on the monotone Fock space implementing the shift automorphism $s$ on $\gpm$. 

Concerning the GNS representation $\big(\ch_{\om_\gpo},\pi_{\om_\gpo}, V_{\om_\gpo,s},\xi_{\om_\gpo}\big)$ of the vacuum state, we get $\ch_{\om_\gpo}=\G_{\rm mon}\big(\ell^2(\bz)\big)$, the monotone Fock space, $\pi_{\om_\gpo}=\id_\gpm$, the identical representation of $\gpm$ on the monotone Fock space, $V_{\om_\gpo,s}=\G_{\rm mon}(u)$, the monotone second quantisation of $u$, and finally $\xi_{\om_\gpo}=e_\gpo$, the vacuum vector. Since $\s_{\mathop{\rm pp}}(u)=\{1\}$, we argue that $\s_{\mathop{\rm pp}}\big(V_{\om_\gpo,s}\big)=\{1\}$ as well.

For $t\in(0,1)$, let $\f_t:=(1-t)\om_\gpo+t\om_\infty$. Its GNS covariant representation $\big(\ch_{\f_t},\pi_{\f_t}, V_{\f_t,s},\xi_{\f_t}\big)$ is easily obtained by $\ch_{\f_t}=\ch_{\om_\gpo}\oplus\ch_{\om_\infty}$,
$\pi_{\f_t}=\pi_{\om_\gpo}\oplus\pi_{\om_\infty}$, $V_{\f_t,s}=V_{\om_\gpo,s}\oplus V_{\om_\infty,s}$, $\xi_{\f_t}=\sqrt{1-t}\xi_{\om_\gpo}\oplus\sqrt{t}\xi_{\om_\infty}$. Therefore, 
$\s_{\mathop{\rm pp}}\big(V_{\f_t,s}\big)=\{1\}$, and consequently, $\s^{(\rm f)}_{\mathop{\rm pp}}(s)=\{1\}$.

The proof now follows from Theorem \ref{muewrt}.
\end{proof}
We can get the above result also by a direct computation.  Namely, by using \cite{CrF2}, Theorem 3.4, we can reduce the matter when $x\not=\a I$ is one of the words generating $\gpa\subsetneq\gpa+\bc I=\gpm$. For all words $a\in\gpa$ in normal order, by reasoning as in \cite{CrF}, Proposition 4.2, we conclude that $\frac1{n}\sum_{k=0}^{n-1}\l^{-k}s^k(a)\rightarrow0$, uniformly for each $\l\in\bt$. It remains the case when $a=m_lm^\dagger_l$, $l\in\bz$. 
For such cases and for $\l\in\bt\smallsetminus\{1\}$, we have to compute $\big\langle\frac1{n}\sum_{k=0}^{n-1}\l^{-k}s^k(a)\xi,\eta\big\rangle$ for unit vectors $\xi,\eta\in\G_{\rm mon}\big(\ell^2(\bz)\big)$. 

After some straightforward computations, we get
$$
\bigg|\bigg\langle\frac1{n}\sum_{k=0}^{n-1}\l^{-k}s^k(a)\xi,\eta\bigg\rangle\bigg|\leq\frac4{n|\l-1|}\rightarrow0\,,
$$
uniformly for $\xi,\eta$ in the unit ball of $\G_{\rm mon}\big(\ell^2(\bz)\big)$.

\subsection{The boolean case}
We consider the $C^*$-dynamical system $(\gpb,s)$, where $\gpb$ is the concrete $C^*$-algebra generated by the identity and the boolean creators $\{b^\dagger_n\mid n\in\bz\}$ acting on the boolean Fock space 
$\G_{\rm boole}(\ell^2(\bz))$ on $\ell^2(\bz)$, and (with an abuse of notation) $s$ is the one-step shift acting on generators as $s(b^\dagger_j)=b^\dagger_{j+1}$, $j\in\bz$.

In \cite{CrF}, it was shown that $\gpb$ is nothing but the $C^*$-algebra $\ck\big(\ell^2(\{\gpo\}\sqcup\bz\big)+\bc I$ generated by all compact operators acting on 
$\G_{\rm boole}(\ell^2(\bz))=\ell^2\big(\{\gpo\}\sqcup\bz\big)$ and the identity 
$I:=\idd_{\ell^2(\{\gpo\}\sqcup\bz)}$. The shift is therefore generated by the adjoint action $\ad_V$, with $V$ defined on the canonical basis $\{e_\gpo\}\sqcup\{e_j\mid j\in\bz\}$ of $\ell^2\big(\{\gpo\}\sqcup\bz\big)$
by
$$
Ve_\gpo=e_\gpo\,,\quad Ve_j=e_{j+1}\,,\,\,j\in\bz\,.
$$
By following \cite{CrF}, Section 7, we have:
\begin{itemize}
\item[{\bf-}] for the fixed-point $*$-subalgebra, $\gpb_1\equiv\gpb^s=\bc P_{e_\gpo}\bigoplus\bc P_{e_\gpo}^\perp$;
\item[{\bf-}]  the set of all invariant states 
$$
\cs(\gpb)^s=\big\{(1-t)\om_\gpo+t\om_\infty\mid t\in[0,1]\big\}
$$ 
is the convex combination of the vacuum state $\om_\gpo$ and the state at infinity $\om_\infty$;
\item[{\bf-}] with $a\in\ck\big(\ell^2(\{\gpo\}\sqcup\bz\big)$, 
$$
\gpb\ni A+bI\mapsto \ce_1(a+bI):=\big(\langle Ae_\gpo,e_\gpo\rangle+b\big)P_{e_\gpo}+bP_{e_\gpo}^\perp\in\gpb_1
$$
is a conditional expectation, invariant under the shift $s$;
\item[{\bf-}] the $C^*$-dynamical system $(\gpb,s)$ is uniquely mixing (hence uniquely ergodic, w.r.t. the fixed-point subalgebra, {\it cf.} \cite{F1}) w.r.t. the conditional expectation 
$\ce_1$.
\end{itemize}
Notice that the set $\cs(\gpb)^s$ of the boolean invariant states has the same structure as that $\cs(\gpm)^s$ of the monotone invariant ones. Furthermore,
$\s_{\mathop{\rm pp}}(u)=\{1\}=\s^{({\rm f})}_{\mathop{\rm pp}}(u)$ 
as for the monotone case. Differently to $(\gpm,s)$, the $C^*$-dynamical system $(\gpb,s)$ is uniquely ergodic w.r.t. the fixed-point subalgebra. Therefore, for the convergence of ergodic averages we have for 
$x\in\gpb$ and $\l\in\bt$,
$$
\lim_{n\to+\infty}\frac1{n}\sum_{k=0}^{n-1}\l^{-k}s^k(x)=\bigg\{\begin{array}{ll}
                      \ce_1(x)& \text{if}\,\, \l=1\,, \\[1ex]

                0 & \text{if}\,\, \l\neq1\,.
                    \end{array}
                    \bigg.
$$ 
In order to provide an example for which the involved spectra are non trivial, we consider the tensor product construction of the previous boolean $C^*$-dynamical system with the irrational rotations on the unit circle.

For the irrational number $\th\in(0,1)$, consider  the rotation $R_\th$ on $\bt$ of the angle $2\pi\th$: $R_\th(z):=e^{2\pi\imath\th}z$. 
Let $(\ga,\a)$ be the tensor product $C^*$-dynamical system, where $\ga=C(\bt)\otimes\gpb=C\big(\bt; \gpb\big)$,
$$
\a(f)(z):=s\big(f(e^{2\pi\imath\th}z)\big)\,,\quad z\in\bt\,,\,\,f\in C\big(\bt; \gpb\big)\,,
$$
Finally, define
$$
E_1(f):=\bigg(\!\int\!\otimes\,\ce_1\!\bigg)(f)=\oint\ce_1\big(f(z)\big)\frac{\di z}{2\pi\imath z}\,,\quad f\in C\big(\bt; \gpb\big)\,.
$$
Notice that, with $1\in C(\bt)$ the constant function identically equal to 1, $E_1$ is projecting onto the fixed-point $*$-subalgebra $\ga_1=\bc1\otimes\gpb_1\sim\gpb_1$.
\begin{prop}
The $C^*$-dynamical system $(\ga,\a)$ is uniquely ergodic w.r.t. the fixed-point subalgebra with expectation $E_1$. 

In addition, 
$$
\s_{\mathop{\rm pp}}(\a)=\big\{e^{2\pi\imath l\th}\mid l\in\bz\big\}=\s^{(\rm f)}_{\mathop{\rm pp}}(\a)\,,
$$
where, for $\l_l=e^{2\pi\imath l\th}\in\s_{\mathop{\rm pp}}(\a)$,
$\ga_{\l_l}=u_l\ga_1=\ga_1u_l$,
with $u_l(z)=z^l\otimes I\in\ga_{\l_l}$ unitary.

Finally, for $f\in\ga$ and $\l\in\bt$,
$$
\lim_{n\to+\infty}\frac1{n}\sum_{k=1}^{n-1}\l^{-k}\a^k(f)=\bigg\{\begin{array}{ll}
                     \left(\oint\ce_1\big(f(z)\big)\frac{\di z}{2\pi\imath z^{l+1}}\right)u_l& \text{if}\,\, \l=\l_l\,, \\[1ex]

                0 & \text{if}\,\, \l\neq\l_l\,.
                    \end{array}
                    \bigg.
$$
\end{prop}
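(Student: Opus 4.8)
My plan is to reduce the assertion to the behaviour of the boolean system $(\gpb,s)$ already recorded above, by Fourier-decomposing in the circle variable. Write $\chi_l\in C(\bt)$ for the character $\chi_l(z):=z^l$, so that $u_l=\chi_l\otimes I$, and put $N_{\l,n}:=\frac1{n}\sum_{k=0}^{n-1}\l^{-k}\a^k$, a contraction on $\ga$. Since the trigonometric polynomials with coefficients in $\gpb$ --- the finite sums $f=\sum_{|l|\le N}\chi_l\otimes a_l$ with $a_l\in\gpb$ --- are norm dense in $\ga=C(\bt)\otimes\gpb$, it suffices to compute $\lim_n N_{\l,n}(f)$ on such $f$ and then pass to arbitrary $f\in\ga$ by the standard $3\varepsilon$-argument using $\|N_{\l,n}\|\le1$.

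On a single summand I would use that $\a^k(f)(z)=s^k\big(f(e^{2\pi\imath k\th}z)\big)$, whence $\a^k(\chi_l\otimes a)=\l_l^{\,k}\,\chi_l\otimes s^k(a)$ because $(e^{2\pi\imath k\th}z)^l=\l_l^{\,k}z^l$ and $s^k$ is $\bc$-linear. Therefore
$$
N_{\l,n}(\chi_l\otimes a)=\chi_l\otimes\Big(\frac1{n}\sum_{k=0}^{n-1}(\l\l_l^{-1})^{-k}s^k(a)\Big)=\chi_l\otimes M_{a,\l\l_l^{-1}}(n)\,.
$$
Now $(\gpb,s)$ is uniquely ergodic w.r.t. the fixed-point subalgebra with conditional expectation $\ce_1$ and $\s^{(\rm f)}_{\mathop{\rm pp}}(s)=\{1\}$, so $M_{a,1}(n)\to\ce_1(a)$ in norm, while $M_{a,\m}(n)\to0$ in norm for every $\m\in\bt\smallsetminus\{1\}=\bt\smallsetminus\s^{(\rm f)}_{\mathop{\rm pp}}(s)$ by Theorem \ref{muewrt} applied to $(\gpb,s)$. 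As $\th$ is irrational the numbers $\l_l=e^{2\pi\imath l\th}$ are pairwise distinct, so $\l\l_l^{-1}=1$ for at most one index. Summing over the finitely many $l$ occurring in $f$ yields, in $\|\cdot\|_\ga$,
$$
\lim_n N_{\l,n}(f)=\bigg\{\begin{array}{ll}\chi_l\otimes\ce_1\big(\widehat f(l)\big)\,, & \l=\l_l\,,\\[1ex] 0\,, & \l\notin\{\l_m\mid m\in\bz\}\,,\end{array}\bigg.
$$
where $\widehat f(l):=\oint f(w)\frac{\di w}{2\pi\imath w^{l+1}}\in\gpb$. Both assignments $f\mapsto 0$ and $f\mapsto\oint\ce_1(f(w))\frac{\di w}{2\pi\imath w^{l+1}}$ are norm-continuous (the latter since $\|\ce_1\|=1$), hence the same formulas hold for all $f\in\ga$; by linearity and continuity of $\ce_1$ and centrality of $u_l=\chi_l\otimes I$, the first case rewrites as $\big(\oint\ce_1(f(w))\frac{\di w}{2\pi\imath w^{l+1}}\big)u_l$, which is precisely the claimed limit (replacing $\sum_{k=0}^{n-1}$ by $\sum_{k=1}^{n-1}$ is immaterial, the difference being $\frac1{n}f\to0$).

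It remains to read off the three claims. For $\l=1$ this shows $\frac1{n}\sum_{k=0}^{n-1}\a^k\to E_1$ point-wise in norm; by the characterisation of unique ergodicity w.r.t. the fixed-point subalgebra recalled in the preliminary section, $(\ga,\a)$ enjoys that property and $E_1$ is the associated $\a$-invariant conditional expectation onto $\ga^\a$, so that $\ga^\a$ is the range of $E_1$, which is visibly $\bc1\otimes\gpb_1$ ($E_1(f)$ is the constant function with value in $\gpb^s=\gpb_1$, and every such constant is $\a$-fixed). Each $u_l$ is a unitary with $\a(u_l)=\l_l u_l$, so $\l_l\in\s_{\mathop{\rm pp}}(\a)$; and for every $\f\in\cs(\ga)^\a$ the vector $\pi_\f(u_l)\xi_\f$ has norm one with $V_{\f,\a}\pi_\f(u_l)\xi_\f=\l_l\pi_\f(u_l)\xi_\f$, so $\l_l\in\s^{(\rm f)}_{\mathop{\rm pp}}(\a)$. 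Conversely, for $\nu\in\bt\smallsetminus\{\l_m\mid m\in\bz\}$ we have $N_{\nu,n}\to0$ point-wise in norm; an eigenvector $\a(f)=\nu f$ would give $f=N_{\nu,n}(f)\to0$, so $\nu\notin\s_{\mathop{\rm pp}}(\a)$; and for any $\f\in\cs(\ga)^\a$ the identity $\frac1{n}\sum_{k=0}^{n-1}\nu^{-k}V_{\f,\a}^k\pi_\f(a)\xi_\f=\pi_\f\big(N_{\nu,n}(a)\big)\xi_\f\to0$ on the cyclic domain $\pi_\f(\ga)\xi_\f$, together with $\big\|\frac1{n}\sum_{k=0}^{n-1}\nu^{-k}V_{\f,\a}^k\big\|\le1$, forces $\frac1{n}\sum_{k=0}^{n-1}\nu^{-k}V_{\f,\a}^k\to0$ strongly, whence any $\eta$ with $V_{\f,\a}\eta=\nu\eta$ satisfies $\eta=\frac1{n}\sum_{k=0}^{n-1}\nu^{-k}V_{\f,\a}^k\eta\to0$, i.e. $\eta=0$; thus $\nu\notin\s^{(\rm f)}_{\mathop{\rm pp}}(\a)$. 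Hence $\s_{\mathop{\rm pp}}(\a)=\{\l_l\}=\s^{(\rm f)}_{\mathop{\rm pp}}(\a)$; and since $u_l$ is a central unitary with $\a(u_l)=\l_l u_l$, one has $x\in\ga_{\l_l}$ if and only if $u_l^*x\in\ga^\a=\ga_1$, giving $\ga_{\l_l}=u_l\ga_1=\ga_1 u_l$.

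The argument is little more than bookkeeping on top of Theorem \ref{muewrt} and the known properties of $(\gpb,s)$. The two points where I would be careful are: performing the Fourier manipulation only on trigonometric polynomials, so that no convergence question for Fourier series arises, and extending to general $f$ purely by density and contractivity; and, for the inclusion $\s^{(\rm f)}_{\mathop{\rm pp}}(\a)\subset\{\l_l\}$, exploiting that a spectral eigenvector of $V_{\f,\a}$ is fixed by the corresponding Cesaro means, which have just been shown to converge strongly to $0$ on the orbit of the cyclic vector. I do not expect a genuine obstacle beyond these routine verifications.
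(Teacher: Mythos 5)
Your proof is correct, but it takes a genuinely different route from the paper's. The paper reduces to elementary tensors $f\otimes a$ with $a$ a rank-one operator (or $I$, $P_\gpo$), handles the "scalar" directions via unique ergodicity of the irrational rotation and the compact directions via a direct $O(1/\sqrt{n})$ operator-norm estimate on the boolean Fock space; this yields the $\l=1$ case (hence unique ergodicity w.r.t.\ the fixed-point subalgebra), after which the general-$\l$ limit is obtained by first computing $\s_{\mathop{\rm pp}}(\a)$ and $\s^{(\rm f)}_{\mathop{\rm pp}}(\a)$ (the latter through the explicit description of the invariant states $\psi_t=\int\otimes\,\f_t$) and then invoking Theorems \ref{isoco} and \ref{muewrt} for $(\ga,\a)$ itself. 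You instead Fourier-decompose in the circle variable, observe that $\a^k$ acts diagonally on the modes $\chi_l\otimes a$ with phase $\l_l^{\,k}$, and reduce each mode to a Cesaro average for the boolean shift at the twisted eigenvalue $\l\l_l^{-1}$; the already-established facts about $(\gpb,s)$ (unique ergodicity w.r.t.\ the fixed-point subalgebra and $\s^{(\rm f)}_{\mathop{\rm pp}}(s)=\{1\}$ via Theorem \ref{muewrt}) then deliver the limit formula for \emph{all} $\l\in\bt$ in one stroke, and the spectral identities, the identification $\ga_{\l_l}=u_l\ga_1=\ga_1u_l$, and unique ergodicity drop out as corollaries rather than being needed as inputs. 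What your route buys is self-containedness: you never need to classify $\cs(\ga)^\a$ nor to apply Theorem \ref{isoco}, and your strong-convergence argument for $\s^{(\rm f)}_{\mathop{\rm pp}}(\a)\subset\{\l_l\}$ (Cesaro means of $V_{\f,\a}$ vanish strongly because they vanish in norm on $\ga$, hence kill any peripheral eigenvector) is a clean substitute for the paper's appeal to the structure of $V_{\psi_t,\a}$. What the paper's route buys is that it exercises the general machinery the article develops and gives the quantitative $\|f\|_\infty/\sqrt{n}$ decay on the compact part, which your argument does not produce. Both arguments are sound; your care in restricting the Fourier manipulation to trigonometric polynomials and extending by contractivity is exactly what is needed.
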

\begin{proof}
By a standard approximation argument, we can reduce the matter to a finite linear combination of generators of $\ga$ of the form $x=f\otimes a$, where $f\in C(\bt)$ and $a\in\cb\big(\ell^2\big(\{\gpo\}\sqcup\bz\big)\big)$ is a rank-one operator of the form $\langle\,{\bf\cdot}\,,e_i\rangle e_j$, $i,j\in\{\gpo\}\cup\bz\}$, or $a=I$. In the latter case, we simply get 
\begin{align*}
\lim_{n\to+\infty}&\bigg(\frac1{n}\sum_{k=0}^{n-1}\a^k(x)\bigg)=\bigg[\lim_n\bigg(\frac1{n}\sum_{k=0}^{n-1}f\circ R^k_\th\bigg)\bigg]I\\
=&\bigg(\oint f(z)\frac{\di z}{2\pi\imath z}\bigg)I=E_1(x)\,,
\end{align*}
point-wise in norm, because of the unique ergodicity of the irrational rotations on the unit circle. The same happens if $a=P_\gpo$, because it is also invariant under the shift:
$$
\lim_{n\to+\infty}\bigg(\frac1{n}\sum_{k=0}^{n-1}\a^k(x)\bigg)=\bigg(\oint f(z)\frac{\di z}{2\pi\imath z}\bigg)P_\gpo=E_1(x)\,.
$$

Suppose now that $a=\langle\,{\bf\cdot}\,,e_\gpo\rangle e_j$, with $j\in\bz$. By reasoning as in the proof of Proposition 7.2 of \cite{CrF}, for a unit vector $\xi\in\ell^2(\{\gpo\}\cup\bz)$
we have 
$$
\bigg\|\sum_{k=0}^{n-1}f\big(R^k_\th z\big)s^k(a)\xi\bigg\|_{\ell^2(\{\gpo\}\cup\bz)}
=\sqrt{\sum_{k=1}^n\big|f\big(R^k_\th z\big)\langle\xi,e_\gpo\rangle\big|^2}\leq\sqrt{n}\|f\|_\infty\,.
$$
By taking the adjoint in the above estimate, the same holds true for $a=\langle\,{\bf\cdot}\,,e_j\rangle e_\gpo$. Finally, the above estimate also holds true for $a=\langle\,{\bf\cdot}\,,e_i\rangle e_j$, $i,j=\bz$. We therefore conclude for each rank-one operator $a$ as before,
$$
\bigg\|\frac1{n}\sum_{k=0}^{n-1}f\big(R^k_\th z\big)s^k(a)\bigg\|_{\cb(\ell^2(\{\gpo\}\cup\bz))}\leq\frac{\|f\|_\infty}{\sqrt{n}}\,,\quad z\in\bt\,.
$$

Collecting together, we get
$$
\bigg\|\frac1{n}\sum_{k=0}^{n-1}\a^k(x)\bigg\|_\ga
=\max_{z\in\bt}\bigg\|\frac1{n}\sum_{k=0}^{n-1}f\big(R^k_\th z\big)s^k(a)\bigg\|_{\cb(\ell^2(\{\gpo\}\cup\bz))}
\leq\frac{\|f\|_\infty}{\sqrt{n}}\rightarrow0
$$
when $n\to+\infty$, that is 
$$
\lim_{n\to+\infty}\bigg(\frac1{n}\sum_{k=0}^{n-1}\a^k(x)\bigg)=0=E_1(x)\,.
$$

Concerning the spectra, we have for the operator $V$ implementing the shift on $\G_{\rm boole}$, $\s_{\mathop{\rm pp}}(V)=\{1\}$, see {\it e.g.} \cite{F3}, Section 6. So
$$
\s_{\mathop{\rm pp}}(s)=\s_{\mathop{\rm pp}}\big(\!\ad{}\!_V\!\big)=\{1\}\,,
$$
and therefore
$$
\s_{\mathop{\rm pp}}(\a)=\s_{\mathop{\rm pp}}(R_\th)\,.
$$

As in the monotone case, for the general invariant state $\psi_t=\int\otimes\,\f_t$, we have
$$
\s_{\mathop{\rm pp}}(V_{\psi_t,\a})=\big\{e^{2\pi\imath k\th}\mid k\in\bz\big\}=\s^{(\rm f)}_{\mathop{\rm pp}}(\a)\,,\quad t\in[0,1]\,,
$$
and thus 
$$
\s_{\mathop{\rm pp}}(\a)=\s^{({\rm f})}_{\mathop{\rm pp}}(\a)\,.
$$

The last assertion now follows from Theorems \ref{isoco} and \ref{muewrt}.
\end{proof}

\section*{Acknowledgements}

The author is grateful to R. Floricel for the kind invitation to the University of Regina, where this work was completed. He also acknowledges the support of Italian INDAM-GNAMPA.

The present project is part of:
\begin{itemize}
\item[-] OAAMP - Algebre di operatori e applicazioni a
strutture non commutative in matematica e fisica, CUP E81I18000070005;
\item[-] MIUR Excellence Department Project awarded to the Department of Mathematics, University of Rome Tor Vergata, CUP E83C18000100006.
\end{itemize}


\begin{thebibliography}{9999}
    
\bibitem{AD} Abadie B., Dykema K.
{\it Unique ergodicity of free shifts and some other automorphisms of $C^*$-algebras}, J. Operator Theory {\bf 61} (2009), 279-294.

\bibitem{BF2} Barreto S. D., Fidaleo F. 
{\it Disordered Fermions on lattices and their spectral properties}, 
J. Stat. Phys. {\bf 143} (2011), 657-684.

\bibitem{CrF2} Crismale V., Fidaleo F., Griseta M. E. {\it Wick order, spreadability and exchangeability for monotone commutation relations}, Ann. Henri Poincare, 
{\bf 19} (2018), 3179-3196

\bibitem{CrF} Crismale V., Fidaleo F., Lu Y. G. {\it Ergodic theorems in quantum probability: an application to the monotone stochastic processes},  Ann.  Sc.  Norm.  Sup.
Pisa Cl.  Sci. {\bf 17} (2017), 113-141 

\bibitem{DFGR} Del Vecchio S., Fidaleo F., Giorgetti L., Rossi S. {\it Ergodic properties of the Anzai skew-product for the noncommutative torus}, Ergod. Th. Dyn. Syst., published online  (doi:10.1017/etds.2019.116).

\bibitem{F} Fidaleo F. 
{\it On the entangled ergodic theorem},
Infin. Dimens. Anal. Quantum Probab. 
Relat. Top. {\bf 10} (2007), 67-77.

\bibitem{F1} Fidaleo F. {\it On strong ergodic properties of quantum dynamical systems},
Infin. Dimens. Anal. Quantum Probab. 
Relat. Top. {\bf 12} (2009), 551-564.

\bibitem{F2} Fidaleo F. 
{\it The entangled ergodic theorem in the almost periodic case}, 
Linear Algebra Appl., {\bf 432} (2010), 526-535.

\bibitem{F3} Fidaleo F. {\it Nonconventional ergodic theorems for quantum dynamical systems}, Infin. Dimens. Anal. Quantum Probab. Relat. Top.,  
{\bf 17} (2014), 1450009 (21 pages).

\bibitem{F28} Fidaleo F. {\it A note on Boolean stochastic processes}, Open Sys. Inform. Dyn., {\bf22} (2015), 1550004 (10 pages).

\bibitem{F4} Fidaleo F. {\it Uniform convergence of Cesaro averages for uniquely ergodic $C^*$-dynamical systems}, Entropy,  
{\bf 20} (2018), 987 (9 pages).

\bibitem{FM3} Fidaleo F., Mukhamedov F. {\it Strict weak mixing of
some $C^*$-dynamical systems based on free shifts},
J. Math. Anal. Appl. {\bf 336} (2007), 180-187.

\bibitem{NSZ} Niculescu C. P., Str\"oh A., Zsid\'o L.
{\it Noncommutative estension of classical and multiple recurrence 
theorems}, J. Operator Theory {\bf 50} (2003), 3--52.

\bibitem{P} Paulsen V.
{\it Completely bounded maps and operator algebras}, Cambridge University Press, Cambridge-New York-Melbourne, 2002.

\bibitem{RS} Reed M., Simon B.
{\it Functional analysis}, Academic Press,
New York--London 1980.

\bibitem{R} Robinson, E. A. Jr. {\it On uniform convergence in the Wiener-Wintner theorem}, J. London Math. Soc. (2) {\bf 49} (1994), 493-501. 

\bibitem{Sa} Sakai S. \emph{$C^*$-Algebras and $W^*$-Algebras}, Springer-Verlag, Berlin, 1971.

\bibitem{SN} Sz.-Nagy B. {\it Transformations de l’espace de Hilbert, fonctions de type positif sur un groupe}, 
Acta Sci. Szeged {\bf 15} (1954), 104-114.


\end{thebibliography}
\end{document}